\def\e{\varepsilon}
\newcommand{\E}{\mathbb E}
\DeclareMathOperator{\diam}{diam}
\DeclareMathOperator{\supp}{supp}
\def\R{\mathbb{R}}
\def\N{\mathbb{N}}
\def\S{\EuFrak{S}}
\def\mL{\EuScript{L}}
\def\hL{\widehat{\EuScript{L}_\phi}}
\def\P{\mathbb{P}}
\def\1{\mathbf 1}
\def\uo{\underline{\omega}}
\def\ux{\underline{x}}
\newtheorem{theorem}{Theorem}[section]
\newtheorem{proposition}[theorem]{Proposition}%[section]
\newtheorem{corollary}[theorem]{Corollary}%[section]
\newtheorem{remark}[theorem]{Remark}
\newtheorem{definition}[theorem]{Definition}%[section]
\newtheorem{example}[theorem]{Example}%[section]
\DeclareMathSymbol{\varnothing}{\mathord}{AMSb}{"3F}
\begin{document}

\renewcommand{\theequation}{\thesection$\cdot$\arabic{equation}}

\title{On Transfer Operators and Maps with Random Holes}
\author{Wael Bahsoun} \author{J\"org Schmeling} \author{Sandro Vaienti}

\address{W.\  Bahsoun: Department of Mathematical Sciences, Loughborough University,
Loughborough, Leicestershire, LE11 3TU, UK}
\email{W.Bahsoun@lboro.ac.uk}
\address{J.\ Schmeling: Mathematics Centre for Mathematical Sciences,
Lund Institute of Technology, Lund University Box 118 SE-221 00 Lund, Sweden}
\email{joerg@maths.lth.se}
%\urladdr{http://www.maths.lth.se/matematiklth/personal/joerg/}

\address{S.\ Vaienti: Aix Marseille Universit\'e, CNRS, CPT, UMR 7332, 13288 Marseille, France and
Universit\'e de Toulon, CNRS, CPT, UMR 7332, 83957 La Garde, France. }
\email{vaienti@univ-mrs.fr}

\subjclass{Primary 37A05, 37E05}
\date{\today}
\keywords{Transfer operators, equilibrium states, escape rates, Hausdorff dimension.}
\thanks{WB and JS would like to thank the University of Toulon and CPT Marseille where this work was initiated and finished. WB, JS and SV would like to thank the Isaac Newton Institute for Mathematical Sciences where parts of this research was conducted during the programme Mathematics for the Fluid Earth, 21 October to 20 December 2013. SV thanks the French ANR Perturbations,  the CNRS-PEPS ``Mathematical Methods for Climate Models'' and the CNRS Pics ``Propri\'et\'es statistiques des syst\'emes d\'eterministes et al\'eatoires" N. 05968, for support. SV thanks GDR ``Analyse Multifractal'' for supporting his visit to Lund University.}
\begin{abstract}
We study Markov interval maps with{ \em random holes}. The holes are not necessarily elements of the Markov partition. Under a suitable, and physically relevant, assumption on the noise, we show that the transfer operator associated with the random open system can be reduced to a transfer operator associated with the closed deterministic system. Exploiting this fact, we show that the random open system admits a unique (meaningful) absolutely continuous conditionally stationary measure. Moreover, we prove the existence of a unique probability equilibrium measure supported on the survival set, and we study its Hausdorff dimension.
\end{abstract}
\maketitle

%\pagestyle{myheadings}

%\markboth{MULTIFRACTAL MASS TRANSFERENCE PRINCIPLE}{AI-HUA FAN, J\"{O}RG
%SCHMELING AND SERGE TROUBETZKOY}
\section{Introduction}

A dynamical system is called open if there is a subset in the phase space, called a \textit{hole}, such that whenever an orbit lands in it, the dynamics of this orbit is terminated. Statistical aspects of such open systems have been addressed by many authors, see for instance \cite{Alt,DY,Det}. Open dynamical systems have found interesting applications in physics \cite{Alt, GG, WBW}, and more recently, after the pioneering work of \cite{BY, KL2}, it was found that open systems are intimately related to studying  metastable dynamical systems \cite{BHV, BV1,BV2, Det1, DW, FS, GHW} and their applications in geophysical sciences \cite{DFP, FP}.

\bigskip

Following the work of \cite{BV2}, in this paper we study dynamical systems with {\em random holes}, called random open systems. One of the main motivation for studying random open systems is that they contribute to understanding the long-term statistics of random metastable systems. In \cite{BV2} random perturbations of interval maps that initially admit exactly two invariant ergodic densities were studied. Under random perturbations, which generate \textit{random holes}, leakage of mass between the two initially ergodic subsystems forces the random system to mix and admit a unique invariant density. It was shown in \cite{BV2} that the invariant density of the random system can be approximated in the $L^1$ norm (with respect to Lebesgue measure),  by a particular convex combination of the two invariant ergodic densities of the initial system. In particular, the weights in the convex combination is identified as the ratio of the escape rates from the left and right random open systems.

\bigskip

However, although almost every point (with respect to Lebesgue) from each initially ergodic subsystem escapes to the other ergodic component, some points {\em survive} in their initial set and do not visit the other ergodic component.

\bigskip

The main motivation of this paper is to study statistical properties of orbits which survive escaping from a random open system, and to determine the Hausdorff dimension of their set. From applications point of view, one can use our results to study {\em statistical properties and dimension theory} of orbits that do not visit other ergodic components in a random metastable system, such as the ones studied in \cite{BV2}, and to provide insight to geophysical models that study regions of the ocean with slow or poor mixing properties \cite{DFP}. Mathematically, previous results on the Hausdorff dimension of the survival set were obtained in \cite{GS, FP1, DT, LM, S} for maps with \textit{deterministic} holes.  In our \textit{random} setting, an interesting feature of our current work is that it uses tools from deterministic and closed systems to obtain results in random open systems: the transfer operator associated with our random open system can be reduced to a transfer operator associated with the closed deterministic system.

\bigskip

In section 2 we introduce the class of maps that we study, the associated transfer operator and the space of functions that it acts on, and known results from deterministic closed systems. In section 3 we introduce a class of maps with random holes, the notion of a conditionally stationary measure and prove a theorem highlighting its significance. Section 4 contains a topological characterization of the survival set of the random open system. In section 5 we introduce the transfer operator associated with the random system and show how to reduce it to a transfer operator of the original closed deterministic system. Exploiting this fact, we show that the random open system admits a unique (meaningful) absolutely continuous conditionally invariant measure. Moreover, we prove the existence of a unique probability equilibrium measure supported on the survival set, and we study its Hausdorff dimension. Section 6 contains two simple examples that highlight our results.

\section{Setup and Preliminaries}
\subsection{Class of maps}\label{maps}
Let $I:=[0,1]$ and $T:I\to I$ be a Markov map; i.e., there exists a finite partition of open intervals $\mathcal P=\{\mathcal P_i\}_{i=1}^{l}$ such that:
\begin{enumerate}
\item$ I=\cup_{i=1}^{l}\overline{\mathcal P_i}$, and $\mathcal P_i\cap \mathcal P_j=\emptyset$ when $i\not=j$;
\item $\exists$ $0<\alpha<1$ such that $T_{|P_i}$ is $C^{1+\alpha}$;
\item if $T(\mathcal P_i)\cap\mathcal P_j\not=\emptyset$ then $\mathcal P_j\subset\overline{T(\mathcal P_i)}$.
\item $\sup_{P\in\mathcal P^{(n)}}\text{diam}(P)\to 0$ as $n\to\infty$, where $\mathcal P^{(n)}:=\vee_{i=0}^{n-1}T^{-i}\mathcal P$.\\

\noindent We further assume that
\item $\exists\, N\in\mathbb N$ such that, for all $i=1,\dots l$, $\overline{T^N(\mathcal P_i)}=I.$
\end{enumerate}
\begin{remark}
Under assumptions (1)-(4), it is well know that there exists a semi-conjugacy between a one-sided subshift of finite type $(\Sigma, \sigma)$ and $([0,1], T)$. To avoid using $(\Sigma, \sigma)$ and to keep our presentation mainly on $[0,1]$, we present some of the arguments using the map $T(x)=2x$ mod $1$. All the results of this paper are true for the class of maps introduced above in subsection \ref{maps}.
\end{remark}
\subsection{Space of functions and the transfer operator} We define our space of functions following the prescriptions at the beginning of Sect. 1 in \cite{AD}. Let ${\mathcal J}:=\{T{\mathcal (P_i)}\}_{i=1}^l.$ Since ${\mathcal J}$ is contained in the sigma-algebra $\sigma({\mathcal P})$ generated by ${\mathcal P},$ there exists a coarser partition ${\mathcal G}$ such that $\sigma({\mathcal J})=\sigma({\mathcal G}).$ Let
$$
{\mathcal H}_{\alpha}^{loc}:=\{\phi: I\rightarrow \mathbb{R}|\ \exists C>0, s.t. \ \forall x,y\in G_j, G_j\in {\mathcal G}, \ |\phi(x)-\phi(y)|\le C|x-y|^{\alpha}\}
$$
For $\phi\in\mathcal H^{\text{loc}}_{\alpha}$ we let $C_{\alpha}(\phi)$ denote the semi-norm
$$C_{\alpha}(\phi)=\sup_{\underset{G_j\in\mathcal G}{x\not=y\in G_j}}\frac{|\phi(x)-\phi(y)|}{|x-y|^{\alpha}}.$$
When equipped with the norm $||\cdot||_{\mathcal H^{\text{loc}}_{\alpha}}:=C_{\alpha}(\cdot)+||\cdot||_{\infty}$, $\mathcal H^{\text{loc}}_{\alpha}$ is the Banach space of locally H\"older continuous functions with exponent $\alpha$. If a function $f:I\to\mathbb R$ is H\"older with exponent $\alpha$, on the whole interval $I$, we simply write $f\in\mathcal H_{\alpha}$.\\

Let $\phi\in\mathcal H^{\text{loc}}_{\alpha}$. The transfer operator, associated with $T$ and with potential $\phi$, acting on $\mathcal H^{\text{loc}}_{\alpha}$  is defined as
\[
\mL_\phi(f)(x):=\sum_{Ty=x}e^{\phi(y)}f(y).
\]
 Let $\sigma(\mathcal L_{\phi})$ denote the spectrum of $\mathcal L_{\phi}$ as an operator on $\mathcal H^{\text{loc}}_{\alpha}$, and let $\mL^*_{\phi}$ denote the dual operator of $\mL_{\phi}$.
 \subsection{Conformal measure}
 Let $A$ be a Borel measurable set. A measure $m$ is called $\phi$-conformal if
 $$m(T(A))=\int_Ae^{-\phi}dm$$
 whenever $T:A\to T(A)$ is injective.
\subsection{Known results about the deterministic system $T$}
The following is well known result (see for instance \cite{Bo, Ba})
\begin{proposition}\label{prop0}
Let $T$ satisfy assumptions (1)-(5). The following holds:
\begin{itemize}
\item[(i)] $\mL_\phi:\mathcal H^{\text{loc}}_{\alpha}\to\mathcal H^{\text{loc}}_{\alpha}$ has a dominant simple eigenvalue $\lambda$ and its corresponding eigenfunction, $\rho$, is strictly positive.
\item[(ii)] $\sigma(\mL_{\phi})\setminus \{\lambda\}\subset B(0, r)$, with $r<\lambda$.
\item [(iii)] There is a unique probability measure $\nu$ such that $\mL^*_{\phi}\nu=\nu$;
\item[(iv)] For all $f\in \mathcal H^{\text{loc}}_{\alpha}$ we have
$$\lim_{n\to\infty}||\lambda^{-n}\mL^n_{\phi}f-\rho\int_I fd\nu||_{\infty}=0,$$
and the probability measure $\mu:=g\nu$ is an equilibrium state associated with $\phi$.
\end{itemize}
\end{proposition}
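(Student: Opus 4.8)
The plan is to run the classical Ruelle--Perron--Frobenius / thermodynamic-formalism programme for the transfer operator $\mL_\phi$ acting on $\mathcal H^{\text{loc}}_{\alpha}$ (cf.\ \cite{Bo,Ba}), the hypotheses playing the following roles: (1)--(4) give bounded distortion of the inverse branches of $T^{n}$ together with a mesh $\sup_{P\in\mathcal P^{(n)}}\diam(P)\to 0$, so that for $n$ large these inverse branches are uniform contractions, while (5) supplies the aperiodicity that will pin down the peripheral spectrum. The first step I would carry out is a Lasota--Yorke (Doeblin--Fortet) inequality: using the $C^{1+\alpha}$ regularity of the branches and the chain-rule/distortion estimate for $\mL_\phi^{n}$, one obtains $C>0$, $\eta\in(0,1)$ and constants $B_n$ with
\[
C_\alpha(\mL_\phi^{n}f)\le C\,\eta^{\,n}\,\|\mL_\phi^{n}\1\|_\infty\,C_\alpha(f)+B_n\,\|f\|_\infty,\qquad n\ge1,
\]
alongside the trivial bound $\|\mL_\phi^{n}f\|_\infty\le\|\mL_\phi^{n}\1\|_\infty\,\|f\|_\infty$. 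Since the unit ball of $\mathcal H^{\text{loc}}_{\alpha}$ is relatively compact in the bounded measurable functions equipped with $\|\cdot\|_\infty$ (Arzel\`a--Ascoli, atom by atom of $\mathcal G$), Hennion's theorem --- equivalently Ionescu-Tulcea--Marinescu --- then shows that $\mL_\phi$ is quasi-compact on $\mathcal H^{\text{loc}}_{\alpha}$, with essential spectral radius strictly smaller than its spectral radius, which we call $\lambda$.

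Next, for (i) and (ii): $\mL_\phi$ is a positive operator, so $\lambda$ is an eigenvalue with a nonnegative eigenfunction $\rho\in\mathcal H^{\text{loc}}_{\alpha}$. Hypothesis (5) --- every branch of $T^{N}$ onto --- makes $\mL_\phi^{n_0}$, for $n_0$ large, map the cone of strictly positive functions obeying a fixed local H\"older estimate strictly into itself, and a Birkhoff cone argument in the Hilbert projective metric (equivalently, Ruelle's classical argument transported through the Markov coding of the Remark) then gives that $\lambda$ is simple, $\rho>0$, and $\sigma(\mL_\phi)\setminus\{\lambda\}\subset B(0,r)$ with $r<\lambda$. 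For (iii), $\mL^{*}_\phi$ on $(\mathcal H^{\text{loc}}_{\alpha})^{*}$ has the same spectrum, so $\lambda$ is again a simple eigenvalue; positivity of $\mL_\phi$ makes the corresponding eigenfunctional a positive measure, which I normalise to a probability $\nu$ (and fix the scaling of $\rho$, which is determined only up to a positive scalar, so that $\int_I\rho\,d\nu=1$). Uniqueness of $\nu$ is simplicity of $\lambda$. After replacing $\phi$ by $\phi-\log\lambda$ this reads $\mL^{*}_\phi\nu=\nu$, in agreement with the conformal-measure normalisation above.

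For (iv), the spectral gap yields the decomposition $\mL_\phi=\lambda\Pi+R$ with $\Pi f=\rho\int_I f\,d\nu$, $\Pi R=R\Pi=0$ and $R$ of spectral radius $<\lambda$, hence $\lambda^{-n}\mL_\phi^{n}f\to\rho\int_I f\,d\nu$ in $\mathcal H^{\text{loc}}_{\alpha}$ and a fortiori in $\|\cdot\|_\infty$, which is the stated convergence. Setting $\mu:=\rho\nu$, the identity $\mL_\phi\big((f\circ T)\,\rho\big)=\lambda\,f\,\rho$ together with $\mL^{*}_\phi\nu=\lambda\nu$ gives $\int_I f\circ T\,d\mu=\lambda^{-1}\int_I\mL_\phi\big((f\circ T)\rho\big)\,d\nu=\int_I f\,d\mu$, so $\mu$ is a $T$-invariant probability. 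Bounded distortion then yields the Gibbs bound
\[
\mu\big(\mathcal P^{(n)}(x)\big)\asymp\exp\Big(\textstyle\sum_{k=0}^{n-1}\phi(T^{k}x)-n\log\lambda\Big),
\]
uniformly in $x\in I$ and $n\ge1$, where $\mathcal P^{(n)}(x)$ is the atom of $\mathcal P^{(n)}$ containing $x$. One identifies $\log\lambda$ with the topological pressure $P(\phi)$ (e.g.\ via $\tfrac1n\log\|\mL_\phi^{n}\1\|_\infty\to\log\lambda$, compared with the pressure computed over $(n,\e)$-separated sets), and the Gibbs property, through the variational principle, forces $h_\mu(T)+\int_I\phi\,d\mu=P(\phi)=\log\lambda$ with $\mu$ the unique measure attaining it, by the standard uniqueness argument for Gibbs/equilibrium states (cf.\ \cite{Bo}).

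The \emph{main obstacle} will be the quasi-compactness step: extracting a sharp Lasota--Yorke inequality on the specific space $\mathcal H^{\text{loc}}_{\alpha}$, whose local structure is dictated by the coarser partition $\mathcal G$ rather than $\mathcal P$, and verifying the compact-embedding hypothesis of Hennion's theorem in that setting; everything afterwards is routine positive-operator theory together with the thermodynamic formalism. Alternatively one may bypass most of the work by pulling $\phi$ back to a genuinely H\"older potential $\widetilde\phi$ on the topologically mixing one-sided subshift of finite type $(\Sigma,\sigma)$ of the Remark --- the definition of $\mathcal H^{\text{loc}}_{\alpha}$ is tailored precisely so that ``locally H\"older on $I$'' becomes ``H\"older for the symbolic metric'' --- invoking the classical Ruelle theorem on $\Sigma$, and transporting $\rho,\nu,\mu,\lambda$ back to $I$, the only point to check being that the Markov coding is a measure-theoretic isomorphism for the relevant measures, so that entropies and integrals are preserved. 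The second most delicate point is then the uniqueness assertion in (iv).
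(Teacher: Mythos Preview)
Your sketch is a correct and fairly complete outline of the classical Ruelle--Perron--Frobenius argument; there is no gap in the strategy. However, there is nothing to compare it to: the paper does not prove Proposition~\ref{prop0} at all. It is stated as a ``well known result (see for instance \cite{Bo, Ba})'' and used as a black box throughout. So your proposal is not so much a different route as a reconstruction of the proof the paper deliberately omits, and it is precisely the route taken in the cited references (Lasota--Yorke/Ionescu-Tulcea--Marinescu for quasi-compactness, a cone/positivity argument for simplicity and the gap, duality for $\nu$, and the Gibbs property plus the variational principle for the equilibrium-state identification). Your observation that one may alternatively transport everything to the coded subshift $(\Sigma,\sigma)$ is also exactly in line with the paper's own Remark following assumptions (1)--(5). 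Two small consistency notes you already handled correctly: the paper's statement $\mL_\phi^{*}\nu=\nu$ in (iii) tacitly uses the pressure normalisation $\lambda=1$ (equivalently replaces $\phi$ by $\phi-\log\lambda$), and the ``$g$'' in $\mu:=g\nu$ in (iv) is a slip for $\rho$, since $g$ is reserved later for $\P_\theta(\1(x,\omega)=1)$.
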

It is well known that the measure $\nu$ is also $\phi$-conformal.
\section{Perturbations and random holes}
Let $(\omega_k)_{k\in\mathbb N}$ be an i.i.d.  stochastic process with values in the interval $S:=[0,1/2]$, and with probability distribution $\theta$; we will set $\overline{S}:=S^{\otimes \mathbb{N}}$ for the direct product of $S$ upon which the direct product measure $\theta^{\otimes \mathbb{N}}$ is defined.  We fix a point $x_0\in [0,1]$ and consider ``random'' holes around it. More precisely, for $\omega\in [0,1/2]$ we associate the interval $I_\omega:=(x_0-\omega, x_0+\omega)$. We call such an interval a random {\em hole}.\\

To explain the dynamics of the system with random holes, consider a finite path of the stochastic process, say, $(\omega_0,\omega_1,\dots, \omega_k)$. For this path, we first restrict $T$ to
$I^c_{\omega_o}$, then any point in $I_{\omega_o}^c$ that gets mapped by $T$ into $I_{\omega_1}$, its orbit gets terminated. At time $n=2$, any $x\in \left(I^c_{\omega_0}\cap T^{-1}(I^c_{\omega_1})\right)$ that gets mapped by $T$ into $I_{\omega_2}$ its orbit gets terminated, and so on. The following measure, which was first introduced in \cite{PY} in the determinstic setting and in \cite{BV2} in the random setting, plays a central role in our analysis:
\begin{definition}
A Borel measure $\hat{\alpha}$ on $[0,1]$ satisfying
\begin{equation*}
\hat{\alpha}(A)=\frac{1}{\hat{\lambda}}\int_{S} d\theta(\omega)\hat{\alpha}(T^{-1}A\cap T^{-1}I^c_{\omega});
\end{equation*}
where
$$\hat{\lambda}=\int_S d\theta(\omega)\hat{\alpha}(T^{-1}I_{\omega}^c),$$
is called {\bf conditionally stationary measure}. The \text{escape rate}, with respect to $\hat{\alpha}$, from the random open system is given by $-\ln\hat\lambda$.
Moreover, we have
\begin{equation}\label{eq2}
\hat\lambda^n\hat\alpha(A)=\int_{\bar S}  d\theta^{\infty}(\bar{\omega})\hat\alpha(T^{-n}A\cap T^{-1}I^c_{\omega_1}\cap T^{-2}I^c_{\omega_2}\cap\cdots\cap T^{-n}I^c_{\omega_n}).
\end{equation}
\end{definition}
\subsection{Significance of conditionally stationary measures}
Consider the following random variable:
$$\tau_{\bar\omega}(x)=\sup\{i\ge0:\, T^i(x)\in I^c_{\omega_i}\}.$$
If $\tau_{\bar\omega}(x)=n$, it means that, given the path $\bar\omega$, the orbit of $x$ escapes through a random hole, $I_{\omega_{n+1}}$, exactly at time $n+1$.  We are interested in estimating the following expectation
\begin{equation}\label{eq3}
\E_{\theta}[\E_m[\tau_{\bar\omega}(x)]].
\end{equation}
Note that the first expectation is taken with respect to the ambient measure $m$. If the expectation in \eqref{eq3} is finite, it means that for almost every path $\bar \omega$, $m$-almost every point $x$ will not survive.\\

The following theorem provides a useful estimate to \eqref{eq3}. In particular it shows that whenever $\hat\alpha$ is a conditionally invariant measure which is equivalent to $m$, for almost every path $\bar \omega$, $m$-almost every point $x$ will not survive.
\begin{theorem}\label{prop1}
If $\hat\alpha$ is a conditionally stationary measure then:
\begin{enumerate}
\item $\E_{\theta}[\E_{\hat\alpha}[\tau_{\bar\omega}(x)]]=\frac{\hat\lambda}{1-\hat\lambda};$
\item $\E_{\theta}[\E_{\hat\alpha}[\tau_{\bar\omega}(x)]^2]-(\E_{\theta}[\E_{\hat\alpha}[\tau_{\bar\omega}(x)]])^2=\frac{\hat\lambda}{(1-\hat\lambda)^2}.$
\end{enumerate}
\end{theorem}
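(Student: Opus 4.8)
The plan is to recognize that $\tau_{\bar\omega}(x)$ has, under the joint measure $d\theta^{\otimes\mathbb N}(\bar\omega)\,d\hat\alpha(x)$, a geometric-type distribution whose tail probabilities are exactly the quantities appearing in \eqref{eq2}. Concretely, for a fixed path $\bar\omega$ and an integer $n\ge 0$ the event $\{\tau_{\bar\omega}(x)\ge n\}$ is the event that $x$ has not yet escaped by time $n$, i.e. $x\in I^c_{\omega_0}\cap T^{-1}I^c_{\omega_1}\cap\cdots\cap T^{-n}I^c_{\omega_n}$ up to the bookkeeping of how the index set in \eqref{eq2} is written. First I would carefully set up this identification, integrate over $\bar\omega$, and invoke \eqref{eq2} with $A=[0,1]$ (so $\hat\alpha(A)=1$ and $T^{-n}A=[0,1]$) to get
\[
\int_{\bar S} d\theta^{\otimes\mathbb N}(\bar\omega)\,\hat\alpha\bigl(\{x:\tau_{\bar\omega}(x)\ge n\}\bigr)=\hat\lambda^{\,n}.
\]
A small care point is the indexing discrepancy between the definition of $\tau_{\bar\omega}$ (which starts the sup at $i\ge 0$ and uses $T^i(x)\in I^c_{\omega_i}$) and equation \eqref{eq2} (whose product runs $T^{-1}I^c_{\omega_1},\dots,T^{-n}I^c_{\omega_n}$); I would reconcile these, possibly absorbing an index shift or noting that $\omega_0$ plays a distinguished role, so that the tail probability comes out as a clean power of $\hat\lambda$.

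Once the tail probabilities $\P[\tau\ge n]=\hat\lambda^{\,n}$ are established (with $\tau$ denoting $\tau_{\bar\omega}(x)$ under the joint measure), the rest is the standard moment computation for a random variable with geometric tails. For part (1), I would use $\E[\tau]=\sum_{n\ge 1}\P[\tau\ge n]=\sum_{n\ge 1}\hat\lambda^{\,n}=\frac{\hat\lambda}{1-\hat\lambda}$, noting that $0<\hat\lambda<1$ guarantees convergence. For part (2), I would compute the second moment via $\E[\tau^2]=\sum_{n\ge 1}(2n-1)\P[\tau\ge n]=2\sum_{n\ge 1}n\hat\lambda^{\,n}-\sum_{n\ge 1}\hat\lambda^{\,n}$, using $\sum_{n\ge 1}n\hat\lambda^{\,n}=\frac{\hat\lambda}{(1-\hat\lambda)^2}$, and then form $\E[\tau^2]-(\E[\tau])^2$; the algebra collapses to $\frac{\hat\lambda}{(1-\hat\lambda)^2}$, which is the asserted variance. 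Throughout, I would keep in mind that Fubini's theorem is what lets me write $\E_\theta[\E_{\hat\alpha}[\,\cdot\,]]$ as a single integral over $\bar S\times[0,1]$ against the product measure, which is legitimate since $\tau_{\bar\omega}(x)\ge 0$ and everything is nonnegative.

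The main obstacle I anticipate is not the moment algebra but the measurability and the exact identification of the event $\{\tau_{\bar\omega}(x)\ge n\}$ with the set whose $\hat\alpha$-measure \eqref{eq2} controls — in particular making sure that the ``sup'' in the definition of $\tau$ (as opposed to, say, a ``first entrance'' formulation) produces precisely the nested intersection $\bigcap_{i=1}^{n}T^{-i}I^c_{\omega_i}$ after integration, and that no boundary or null-set issue (the holes are open intervals, $\hat\alpha$ need not charge their boundaries) spoils the equality. Once that bridge is built, \eqref{eq2} does all the analytic work and the theorem follows from elementary power-series identities.
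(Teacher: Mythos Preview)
Your proposal is correct and follows essentially the same route as the paper: both arguments hinge on using \eqref{eq2} with $A=[0,1]$ to identify $\E_\theta\bigl[\hat\alpha(\{\tau_{\bar\omega}\ge n\})\bigr]=\hat\lambda^{\,n}$, then reduce everything to elementary power-series identities, with Tonelli justifying the interchange of sum and expectation. The only cosmetic difference is that the paper writes the inner expectation via the probability mass function, $\sum_n n\bigl(\hat\alpha(R_{n,\bar\omega})-\hat\alpha(R_{n+1,\bar\omega})\bigr)$, and sums $\sum n\hat\lambda^n$ and $\sum n^2\hat\lambda^n$ directly, whereas you use the tail-sum formulas $\E[\tau]=\sum_{n\ge1}\P[\tau\ge n]$ and $\E[\tau^2]=\sum_{n\ge1}(2n-1)\P[\tau\ge n]$; these are equivalent via Abel summation.
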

\begin{proof}
Given a path $\bar\omega$, set $$R_{n,\bar\omega}:=T^{-1}I^c_{\omega_1}\cap T^{-2}I^c_{\omega_2}\cap\cdots\cap T^{-n}I^c_{\omega_n}.$$
By \eqref{eq2}, we have
\begin{equation}\label{eq4}
\E_{\theta}(\hat\alpha(R_{n,\bar\omega}))=\hat\lambda^n.
\end{equation}
Therefore, by \eqref{eq4}, we have
\begin{equation}
\begin{split}
\E_{\theta}[\E_{\hat\alpha}[\tau_{\bar\omega}(x)]]&=\E_{\theta}[\sum_{n=0}^{\infty}n(\hat\alpha(R_{n,\bar\omega})-\hat\alpha(R_{n+1,\bar\omega}))]\\
&=\sum_{n=0}^{\infty}n[\E_{\theta}(\hat\alpha(R_{n,\bar\omega}))-\E_{\theta}(\hat\alpha(R_{n+1,\bar\omega}))]\\
&=\sum_{n=0}^{\infty}n(\hat\lambda^{n}-\hat\lambda^{n+1})=\frac{\hat\lambda}{1-\hat\lambda}.
\end{split}
\end{equation}
In the above computation we have used Tonelli's Theorem to exchange the sum and the expectation, and in the last step we have used the fact that
$$\sum_{n=0}^{\infty}n\hat\lambda^n=\frac{\hat\lambda}{(1-\hat\lambda)^2}.$$ Similarly, for (2), using the fact that
$$\sum_{n=0}^{\infty}n^2\hat\lambda^n=\hat\lambda\frac{1+\hat\lambda}{(1-\hat\lambda)^3},$$ we have
\begin{equation}
\begin{split}
\E_{\theta}[\E_{\hat\alpha}[\tau_{\bar\omega}(x)]^2]-(\E_{\theta}[\E_{\hat\alpha}[\tau_{\bar\omega}(x)]])^2&=\sum_{n=0}^{\infty}n^2((\hat\lambda^{n}-\hat\lambda^{n+1})-\frac{\hat\lambda^2}{(1-\hat\lambda)^2}\\
&=(1-\hat\lambda)\sum_{n=0}^{\infty}n^2\hat\lambda^n-\frac{\hat\lambda^2}{(1-\hat\lambda)^2}=\frac{\hat\lambda}{(1-\hat\lambda)^2}.
\end{split}
\end{equation}
\end{proof}
\begin{remark}
In \cite{BV2}, it was proved that piecewise expanding maps with random holes admit a conditionally invariant measure which is equivalent to Lebesgue. Moreover, the index of dispersion provided by Theorem \ref{prop1}
$$\frac{\E_{\theta}[\E_{\hat\alpha}[\tau_{\bar\omega}(x)]^2]-(\E_{\theta}[\E_{\hat\alpha}[\tau_{\bar\omega}(x)]])^2}{\E_{\theta}[\E_{\hat\alpha}[\tau_{\bar\omega}(x)]]}$$ plays a crucial role in approximating invariant densities of metastable random maps.
\end{remark}

Conditionally invariant measures are quite tricky. In the case of deterministic holes, Demers and Young provided an example of a map with a hole that has infinite number of absolutely continuous conditionally invariant measures (accim) with overlapping supports \cite{DY}. Moreover, all the accims in the example of \cite{DY} have the same escape rate. However, it is suggested in \cite{DY} that only `natural' accims are meaningful. For instance, `natural' maybe in the sense that the density of the accim belongs to a certain class of functions that include the constant density, such that under the iterates of the conditional transfer operator each function in this class converges in the appropriate topology to the density of the `natural' accim (see section 5.2 of \cite{DY}).\\

In \cite{BV2} we studied maps with random holes in the case where the potential of the transfer operator is $-\ln|T'(x)|\in BV$, where $BV$ is the space of functions of bounded variation on the unit interval. For sufficiently small holes, using the perturbation result of \cite{KL1}, we proved in \cite{BV2} the existence of a unique absolutely continuous conditionally stationary measure whose density belongs to $BV$, and which is natural along the lines above. In the current work, the transfer operator has a general H\"older potential, and there is no smallness condition on the size of the holes. In Theorem \ref{main} and Corollary \ref{cor1} we obtain a unique absolutely continuous conditionally stationary measure that satisfy the criteria of section 5.2 of \cite{DY}.

\vskip 1cm
%Particular interesting cases are when the %probability of having no hole at all is positive, %i.e. $\theta (0)>0$. This allows the physical %interpretation that one randomly swithches on a %random hole with leaving the possibilty of not %switching on a hole at all.  If $\theta (0)=0$ we %will have with probability 1 always an active hole %and the phase space $[0,1]$ can be approximated by %the set of trajectories never visiting a small %hole, i.e. by %$\bigcap_{n\in\N}T^{n}\left([0,1]\setminus %(x_0-\e,x_0+\e)\right)$. In this approximated phase %space the probabilty of not having a hole at all is %positive.

\section{Topological description of the surviving set}
We now introduce the random function
\[
\1(x,\omega)=\begin{cases} 1 & x\not\in I_\omega \\ 0 & \text{ else} \end{cases}.
\]
Clearly
\begin{equation}\label{trans}
\begin{split}
\P_\theta(\1 (x,\omega)=1)&=\int_{-\infty}^\infty\1 (x,\omega)\, d\theta(\omega)\\&=\theta(|x-x_0|\ge \omega)=\int_0^{|x-x_0|}\, d\theta\\&=F_\theta(|x-x_0|),
\end{split}
\end{equation}
where $F_\theta$ is the distribution function of $\omega$ (We note that here $F_\theta(0)=\theta (\{0\}$).
\vskip 1cm

Let $\uo =\omega_0\omega_1\cdots\omega_n\cdots\in [0,1]^\N$.
We introduce the i.i.d. (for fixed $x$) stochastic functional process
\[
g_n(x,\uo):=\1 (x,\omega_n).
\]
The function $g_n(T^nx,\uo)$ describes whether the trajectory of a point $x$ falls into a hole at time $n$ or not.

We are interested in the set of points that avoid, under the dynamics of $T$, the random holes, i.e. those trajectories that ``survive''. We first start with a topological description.

Given a realization $\uo\in [0,1]^\N$ we define its surviving set as
\[
\S_{\uo}:=\left\{x\in [0,1]\, :\, g_n(T^nx,\omega)=1 \,\, \forall n\in\N\right\}.
\]
The potential surviving set is defined as
\[
\S:=[0,1]\setminus\{x\in [0,1]\, :\, x\not\in\S_{\uo}\, \text{ for $\theta^{\otimes\N}$ - a.e. } \uo\}.
\]

\begin{theorem}
Let $\supp \theta =[a,b]$. Then
\[
\S=\bigcup_{m\in\N}T^{-m}\left(\bigcap_{n\in\N}T^{-n}([0,1]\setminus (x_0-b,x_0+b))\right).
\]
\end{theorem}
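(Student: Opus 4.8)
The plan is to recast the statement as a Borel--Cantelli dichotomy for the independent ``escape events'' $E_n:=\{\uo:\ T^nx\in I_{\omega_n}\}$. For fixed $x$ these are independent (each $E_n$ depends only on $\omega_n$, and the $\omega_n$ are i.i.d.), and by \eqref{trans} $\theta^{\otimes\N}(E_n)=1-F_\theta(|T^nx-x_0|)$, so
$$\theta^{\otimes\N}\big(\{\uo:\ x\in\S_{\uo}\}\big)=\theta^{\otimes\N}\Big(\bigcap_{n\ge0}E_n^c\Big)=\prod_{n\ge0}F_\theta(|T^nx-x_0|).$$
By the definition of $\S$, $x\in\S$ precisely when this product is strictly positive, i.e.\ exactly when every factor is positive and $\sum_{n\ge0}\big(1-F_\theta(|T^nx-x_0|)\big)<\infty$. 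The only input about the noise is that $\supp\theta=[a,b]$ forces $F_\theta(t)=1$ exactly when $t\ge b$ (equivalently $T^nx\notin(x_0-b,x_0+b)$) and $1-F_\theta(t)=\theta((t,b])>0$ whenever $t<b$.

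This makes the inclusion ``$\supseteq$'' immediate: if $x$ lies in the right--hand side, pick $m$ with $T^mx\in K:=\bigcap_{n}T^{-n}\big([0,1]\setminus(x_0-b,x_0+b)\big)$; then $|T^nx-x_0|\ge b$ for all $n\ge m$, so $F_\theta(|T^nx-x_0|)=1$ there, the series converges, and the product reduces to a finite product of positive factors, whence $x\in\S$. For ``$\subseteq$'' I argue contrapositively: if $x$ is not in the right--hand side, then $N:=\{n:\ T^nx\in(x_0-b,x_0+b)\}$ is infinite, and $\theta^{\otimes\N}(E_n)=\theta((|T^nx-x_0|,b])>0$ for $n\in N$. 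It suffices to show $\sum_{n\in N}\theta((|T^nx-x_0|,b])=+\infty$, for then the second Borel--Cantelli lemma (independence of the $E_n$) gives that $\theta^{\otimes\N}$--a.e.\ $\uo$ meets a hole infinitely often along the orbit of $x$, so $x\notin\S_{\uo}$ a.s.\ and $x\notin\S$. The divergence is clear when $\liminf_{n\in N}|T^nx-x_0|<b$, since then $\theta((|T^nx-x_0|,b])$ stays above some fixed $\theta((b',b])>0$ for infinitely many $n\in N$.

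The remaining possibility is $|T^nx-x_0|\to b$ along $N$, i.e.\ the orbit enters $(x_0-b,x_0+b)$ infinitely often but only by accumulating on its endpoints $\{x_0-b,x_0+b\}$; this case has to be excluded using the dynamics of $T$ (expansion, the Markov property~(3), the mixing condition~(5)). A natural route: extract $T^{n_k}x\to p\in\{x_0\pm b\}$ and follow the orbit of $p$. If some iterate $T^jp$ lies in the open interval $(x_0-b,x_0+b)$, then, by continuity of $T$ at the iterates of $p$, $T^{n_k+j}x$ lies a definite distance inside that interval for large $k$, contradicting $|T^nx-x_0|\to b$ along $N$; otherwise the whole forward orbit of $p$ avoids $(x_0-b,x_0+b)$, i.e.\ $p\in K$, and one should then exploit the expansion of $T$ near the orbit of $p$ to manufacture infinitely many ``deep'' returns of the orbit of $x$ into $(x_0-b,x_0+b)$, again making $\sum_{n\in N}\theta((|T^nx-x_0|,b])=+\infty$.

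I expect this last point to be the crux and the main obstacle: ruling out an orbit that visits $(x_0-b,x_0+b)$ infinitely often while rushing to its endpoints fast enough to keep the escape probabilities summable. One must also be careful with orbit points that fall on boundaries of the Markov partition, where $T$ is discontinuous and the continuity arguments above need to be adapted.
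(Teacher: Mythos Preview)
Your approach via the independence of the escape events and the second Borel--Cantelli lemma is exactly the paper's strategy. For ``$\supseteq$'' the paper says, in one sentence, that an orbit visiting $(x_0-b,x_0+b)$ only finitely often has positive survival probability---this is your finite-product argument. For ``$\subseteq$'' the paper fixes $\e>0$, notes that if the orbit hits $(x_0-b+\e,x_0+b-\e)$ infinitely often then $\P_\theta(g_n(T^nx,\uo)=0)$ is bounded away from zero along those times, invokes Borel--Cantelli, and writes ``This proves `$\subseteq$'.'' That is precisely your case $\liminf_{n\in N}|T^nx-x_0|<b$, and the paper says nothing further.

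The boundary case you single out---the orbit entering $(x_0-b,x_0+b)$ infinitely often while $|T^nx-x_0|\to b$ along those returns, possibly fast enough that $\sum_{n\in N}\theta\big((|T^nx-x_0|,b]\big)<\infty$---is simply not addressed in the paper. Your concern is legitimate: under the bare hypothesis $\supp\theta=[a,b]$ nothing forces $\theta((t,b])$ to be bounded below as $t\uparrow b$ (this would follow, for instance, from $\theta(\{b\})>0$, but that is not assumed), and no dynamical property of $T$ stated here excludes such grazing orbits. So the gap you flag is genuine and is present in the paper's own proof as well; you have been more scrupulous than the authors. The dynamical repair you sketch (accumulation on an endpoint $p$, following the orbit of $p$, exploiting expansion) is a reasonable line of attack, but---as you yourself note---it is incomplete and would need care at partition boundaries and, more seriously, a quantitative link between the approach rate to $p$ and the tail behaviour of $\theta$ near $b$.
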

\begin{proof}
If a point's orbit visits the interval $(x_0-b,x_0+b)$ only finitely many times its surviving probability is positive. Hence the inclusion ''$\supseteq$'' is clear.

Fix $\e>0$ and $x\in [0,1]$. If the trajectory of $x$ hits the interval $(x_0-b+\e,x_0+b-\e)$ infinitely often the Borel Cantelli Lemma implies that it will eventually fall into a hole since $\P_\theta(g_n(T^nx,\uo)=0)$ is, uniformly in $n$, bounded away from zero as long as $|T^nx-x_0|<b-\e$. This proves ''$\subseteq$''.
\end{proof}
\begin{remark}\label{non-empty}
For $T(x)=2x$ $\text{mod }1$, it is easy to check that if $b<1/8$, then the potential surviving set is non-empty.
In fact if $b<1/8$ the hole $(x_0-b,x_0+b)$ is contained in two adjacent binary intervals of length $1/4$. In the binary coding these two intervals correspond (for each interval separately) to fixing the first two digits, i.e. a cylinder of length 2. Now the set of symbolic sequences $\left\{\ux\in\{0,1\}^\N\right\}$ that do not contain any two a priori fixed words out of $[00], [01], [10], [11]$ as a sub-word contains the sequences $0^\infty$, $1^\infty$ or $(01)^\infty$.
\end{remark}

%%%%%%%%%%%%%%%%%%%%%%%%%%%%%%%%%%%%%%%%%%%%%%%%%%%%%%%%%%%%%%%

\section{Statistical aspects of the surviving trajectories}

To study the long-term statistics of the surviving trajectories, we first introduce the averaged transfer operator associated with the system with random holes. We will introduce in this section a interesting condition\footnote{The justification and the interpretation of this condition will be discussed below in this section. See subsection \ref{inter}.} that allows us to reduce the transfer operator associated with the system with random holes to a transfer operator (with a new potential) associated with the deterministic and closed system $T$.

\subsection{The averaged transfer operator} In random dynamical systems \cite{K}, it is often useful to study the average transfer operator of the random system, which is the integral, with respect to the noise, of the transfer operators associated with the perturbed maps. Here in our setting, the map is fixed at all times. However, a hole is randomly selected at each time step.
\vskip 1cm
Let $\phi\in\mathcal H^{\text{loc}}_{\alpha}$. The averaged transfer operator with potential $\phi$, associated with the system with random holes and acting on $\mathcal H^{\text{loc}}_{\alpha}$, is defined by:
\[
\hL (f)(x):=\int_S\mL_{ \phi}(f\cdot \1(\cdot, \omega))(x)\, d\theta(\omega).
\]
Its dual operator, which acts on the space of finite measures, will be denoted by $\hL^\ast$. Notice that without further assumptions on the hole $I_{\omega}$ the operator $\hL (f)$ will not leave the space $\mathcal H^{\text{loc}}_{\alpha}$ invariant. We provide in this way; we first
recall the definition of $\P_{\theta}$ in \eqref{trans} and we  write
$$g(x):=\P_{\theta}\left(\1(x, \omega)=1\right)\text{ and } \psi(x):=\log g(x).$$ Then we will make the following \\
{\bf Assumption:} $\psi\in\mathcal H_{\alpha}$.\\

\subsection{When does $\psi\in\mathcal H_{\alpha}$? And what does it physically mean?}\label{inter}
\begin{definition}
A measure $\theta$ on $[0,+\infty)$ is called {\em Ahlfohrs upper semi-regular} if there is a constant $K>0$ and a real number $\alpha>0$ such that for all non-empty {\bf open} intervals $I\subset{[0,+\infty)}$
\[
\frac{\theta (I)}{(\diam I)^\alpha}<K.
\]
\end{definition}

\begin{remark}
Ahlfors upper semi-regular measures include fractal measures like the measures of maximal dimension of dynamically defined Cantor sets, i.e. Cantor sets that arise from smooth expanding repellers.
\end{remark}

\begin{theorem}
The function $g$ is H\"older continuous if $\theta$ is Ahlfors upper semi-regular. If in addition $\theta(\{0\})>0$ then $\psi$ is H\"older continuous.
\end{theorem}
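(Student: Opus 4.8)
The plan is to write $g$ explicitly through the distribution function of $\theta$ and then read Hölder continuity straight off the Ahlfors bound. By \eqref{trans} we have $g(x)=F_\theta(|x-x_0|)$, where $F_\theta(t)=\theta([0,t])$. Fix $x,y\in[0,1]$, put $s:=|x-x_0|$ and $t:=|y-x_0|$, and assume without loss of generality $s\le t$. Since $F_\theta$ is non-decreasing,
\[
|g(x)-g(y)|=F_\theta(t)-F_\theta(s)=\theta\big((s,t]\big),
\]
and by the reverse triangle inequality $t-s=\big|\,|y-x_0|-|x-x_0|\,\big|\le|x-y|$. So it suffices to bound $\theta((s,t])$ by a constant multiple of $(t-s)^\alpha$.

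The one point needing care is that the Ahlfors hypothesis controls \emph{open} intervals, whereas $(s,t]$ is half-open. I would get around this by outer approximation: for every $\e>0$ one has $(s,t]\subset(s,t+\e)$, and $(s,t+\e)$ is a non-empty open interval of diameter $t-s+\e$, so Ahlfors upper semi-regularity gives $\theta((s,t])\le\theta((s,t+\e))<K(t-s+\e)^\alpha$; letting $\e\downarrow 0$ yields $\theta((s,t])\le K(t-s)^\alpha\le K|x-y|^\alpha$. Hence $g\in\mathcal H_\alpha$ with $C_\alpha(g)\le K$, which is the first assertion.

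For the second assertion I would first note that $g$ is bounded away from $0$: for every $x$ the point $0$ lies in $[0,|x-x_0|]$, so $g(x)=\theta([0,|x-x_0|])\ge\theta(\{0\})=:\delta>0$, while trivially $g\le1$. Since $\log$ is Lipschitz on $[\delta,1]$ with constant $1/\delta$ (mean value theorem), the first part gives
\[
|\psi(x)-\psi(y)|=|\log g(x)-\log g(y)|\le\frac1\delta\,|g(x)-g(y)|\le\frac K\delta\,|x-y|^\alpha,
\]
so $\psi\in\mathcal H_\alpha$. There is essentially no serious obstacle beyond the bookkeeping with half-open versus open intervals in the middle step; everything else uses only monotonicity of $F_\theta$, the reverse triangle inequality, and the elementary Lipschitz estimate for $\log$ away from $0$.
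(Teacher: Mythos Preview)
Your proof is correct and follows essentially the same route as the paper's: both express $g$ via the distribution function $F_\theta$, bound the increment by the $\theta$-measure of a short interval, and invoke the Ahlfors estimate. Your version is in fact slightly tidier than the paper's, since you handle the half-open interval $(s,t]$ by an explicit outer approximation and treat the case $x=x_0$ uniformly rather than splitting it off, and you spell out the Lipschitz bound for $\log$ on $[\delta,1]$ that the paper leaves implicit.
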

\begin{proof}
Let $x,y\ne x_0$ and $I_{x,y}=(|x-x_0|,|y-x_0|)\subset\R^+$. Then $\diam I_{x,y}\le |x-y|$. If $\theta$ is Ahlfors upper semi-regular we conclude
\begin{align*}
|g(x)-g(y)|&=|F_\theta(|x-x_0|)-F_\theta(|y-x_0|)=\left|\int_0^{|x-x_0|}\, d\theta -\int_0^{|y-x_0|}\, d\theta\right|\\&
=\left|\int_{|y-x_0|}^{|x-x_0|}\, d\theta\right|=\theta (I_{x,y})\le K(\diam I_{x,y})^\alpha\\&
\le K|x-y|^\alpha.
\end{align*}
If $x=x_0$ then
\begin{align*}
|g(x_0)-g(y)|&=\left|\theta (\{0\})-\int_0^{|y-x_0|}\, d\theta\right|\\&
=\left|\theta (\{0\})- \left(\theta (0)+\lim_{\e\to 0^+}\int_\e^{|y-x_0|}\, d\theta\right)\right|\\&
=\lim_{\e\to 0^+}\theta ((\e,|y-x|)\le K|x-y|^\alpha.
\end{align*}
If in addition $\theta (\{0\})>0$ we have that $F_\theta(|x-x_0|)>0$ for all $x\in [0,1]$ and hence $g>0$. That together with the H\"older continuity of $g$ implies that $\psi=\log g$ is H\"older continuous.
\end{proof}
\begin{remark}
$\theta (\{0\})>0$ has a physical interpretation. It means that the system randomly switches on a random hole and it has the possibility of not switching on any hole at all. More precisely, at any moment $n$ the system is closed (no trajectory falls into a hole) with positive probability.
\end{remark}
\subsection{Conditionally invariant measures and equilibrium states}
\begin{theorem}\label{main} Assume $\theta$ is Ahlfohrs upper semi-regular. For $\psi \in\mathcal H^{\text{loc}}_{\alpha}$ we have
\begin{itemize}
\item[(i)] For $f\in\mathcal H^{\text{loc}}_{\alpha}$ we have
$\hL (f)=\mL_\phi(f\cdot g)$ and \[
\hL (f)=\mL_{\phi +\psi} (f).
\]
\item[(ii)] $\supp g=[0,1]\setminus (x_0-a,x_0+a)$ where $\supp\theta=[a,b]$.
\item[(iii)] $\hL:\mathcal H^{\text{loc}}_{\alpha}\to\mathcal H^{\text{loc}}_{\alpha}$ has a dominant simple eigenvalue $\hat\lambda$ and its corresponding eigenfunction, $\hat\rho$, is strictly positive. There is a unique eigenmeasure $\hat\nu$ with $\hL^\ast(\hat\nu)=\hat\lambda \hat\nu$. The {\em associated measure} is given by $\hat\mu_{\phi,\theta}:=\hat\rho\hat\nu$.
\item[(iv)] $\supp\hat\mu_{\phi,\theta}=\bigcap_{n\in\N}T^{-n}\left([0,1]\setminus (x_0-a,x_0+a)\right)$
\item[(v)] The associated equilibrium state to the potential $\phi +\psi$ is fully supported.
\end{itemize}
\end{theorem}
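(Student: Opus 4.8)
The plan is to route everything through part~(i), which collapses the averaged operator $\hL$ onto an ordinary Ruelle operator with the modified potential $\Phi:=\phi+\psi$; parts (iii)--(v) then follow by applying Proposition~\ref{prop0} to $\Phi$, together with a support analysis of the eigenmeasure. For part~(i): since $T$ is a finite Markov map, the fibre $\{y:Ty=x\}$ has at most $l$ points, so in
\[
\hL(f)(x)=\int_S\Big(\sum_{Ty=x}e^{\phi(y)}f(y)\,\1(y,\omega)\Big)\,d\theta(\omega)
\]
the (finite) sum and the integral may be interchanged; using $\int_S\1(y,\omega)\,d\theta(\omega)=g(y)$ and $g=e^{\psi}$ this gives at once $\hL(f)=\mL_\phi(f\cdot g)=\mL_{\Phi}(f)$. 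Moreover $f\cdot g\in\mathcal H^{\text{loc}}_{\alpha}$ whenever $f\in\mathcal H^{\text{loc}}_{\alpha}$ (a product of bounded locally H\"older functions is locally H\"older), so $\hL=\mL_{\Phi}$ does preserve $\mathcal H^{\text{loc}}_{\alpha}$, which also settles the remark preceding the theorem.

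\emph{Part (ii).} By \eqref{trans}, $g(x)=F_\theta(|x-x_0|)=\theta\big([0,|x-x_0|]\big)$. If $|x-x_0|<a$ then $[0,|x-x_0|]$ misses $\supp\theta=[a,b]$, so $g(x)=0$; if $|x-x_0|>a$ then $g(x)\ge\theta\big([a,|x-x_0|]\big)>0$ because $a\in\supp\theta$. Thus $\{|x-x_0|>a\}\subseteq\{g>0\}\subseteq\{|x-x_0|\ge a\}$, and taking closures (recall $g$ is continuous) yields $\supp g=[0,1]\setminus(x_0-a,x_0+a)$.

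\emph{Parts (iii)--(iv), inclusion ``$\subseteq$''.} Applying Proposition~\ref{prop0} to $\Phi\in\mathcal H^{\text{loc}}_{\alpha}$ gives the dominant simple eigenvalue $\hat\lambda$, the strictly positive eigenfunction $\hat\rho$, the unique eigenmeasure $\hat\nu$ with $\mL_{\Phi}^{\ast}\hat\nu=\hat\lambda\hat\nu$, the convergence $\hat\lambda^{-n}\mL_{\Phi}^n f\to\hat\rho\int f\,d\hat\nu$, and the fact that $\hat\mu_{\phi,\theta}=\hat\rho\hat\nu$ is the equilibrium state of $\Phi$; since $\hat\rho>0$ this proves (iii) and reduces $\supp\hat\mu_{\phi,\theta}$ to $\supp\hat\nu$. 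For the inclusion ``$\subseteq$'' in (iv), set $B:=(x_0-a,x_0+a)$. By (ii), $\mL_{\Phi}(\1_B)(x)=\sum_{Ty=x,\,y\in B}e^{\phi(y)}g(y)=0$, so $\hat\lambda\,\hat\nu(B)=\int\mL_{\Phi}(\1_B)\,d\hat\nu=0$; and since $\mL_{\Phi}(\1_{T^{-1}C})=\1_C\cdot\mL_{\Phi}(\1)$, the relation $\hat\nu(C)=0$ forces $\hat\nu(T^{-1}C)=0$, hence inductively $\hat\nu(T^{-n}B)=0$ for all $n$. So $\hat\nu$ (equivalently $\hat\mu_{\phi,\theta}$) charges no point of $\bigcup_n T^{-n}B$, i.e. $\supp\hat\mu_{\phi,\theta}\subseteq\bigcap_{n\in\N}T^{-n}\big([0,1]\setminus(x_0-a,x_0+a)\big)=:\Lambda$.

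\emph{The reverse inclusion, part (v), and the main obstacle.} For ``$\supseteq$'' I would take a nonnegative $f\in\mathcal H^{\text{loc}}_{\alpha}$ with $f(p)>0$ for a given $p\in\Lambda$ and show $\int f\,d\hat\nu>0$; via $\int f\,d\hat\nu=\hat\lambda^{-n}\int\mL_{\Phi}^n f\,d\hat\nu$ this amounts to bounding $\mL_{\Phi}^n f$ below by a positive multiple of $\hat\lambda^n$ on a set of positive $\hat\nu$-measure, which one does by exhibiting, for each large $n$, an $n$-th preimage $z$ of a fixed point with $z$ near $p$ and $T^j z\notin B$ for $0\le j<n$, using the uniform covering property (5) together with (4). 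The delicate point --- and the step I expect to be the real obstacle --- is precisely here: because the holes are \emph{not} subordinate to the Markov partition, $\Lambda$ is not a subshift of finite type, and one must show that removing the holes does not disconnect the surviving dynamics, i.e. that $(\Lambda,T|_{\Lambda})$ is still topologically transitive (in fact mixing); this is where the finiteness of $\mathcal P$ and assumption (5) are genuinely needed, and it also underlies the simplicity of $\hat\lambda$. Granting this, $\supp\hat\mu_{\phi,\theta}=\Lambda$, and since by (iii) the equilibrium state of $\phi+\psi$ is $\hat\mu_{\phi,\theta}$ while $\Lambda$ is the whole phase space of the open system, assertion (v) follows: the equilibrium state is fully supported.
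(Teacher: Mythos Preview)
Your parts (i)--(iii) match the paper's argument essentially verbatim. Part~(ii) is even a bit more careful than the paper, which simply writes $g(x)=0\iff |x-x_0|<a$.

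The genuine gap is in (iv)--(v): you have overlooked that the standing hypothesis $\psi\in\mathcal H^{\text{loc}}_{\alpha}$ already forces $a=0$. Indeed $\psi=\log g$ must be real-valued to lie in $\mathcal H^{\text{loc}}_{\alpha}$, so $g>0$ on all of $[0,1]$; in particular $g(x_0)=F_\theta(0)=\theta(\{0\})>0$, whence $0\in\supp\theta=[a,b]$ and $a=0$. Consequently $B=(x_0-a,x_0+a)=\emptyset$ and $\Lambda=[0,1]$. The ``reverse inclusion'' in (iv) is then nothing more than the assertion $\supp\hat\mu_{\phi,\theta}=[0,1]$, which is immediate from Proposition~\ref{prop0} applied to the H\"older potential $\phi+\psi$ on the full (closed) system: the equilibrium state of a H\"older potential under the topologically mixing Markov map $T$ is fully supported. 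This is exactly how the paper proves (v): it notes that the H\"older continuity of $\psi=\log g$ forces $\supp g=[0,1]$. There is no need to establish transitivity of $T|_\Lambda$ for a non-Markov $\Lambda$, and the ``main obstacle'' you flag simply does not arise under the theorem's hypotheses. Your reading of (v) as ``fully supported on the phase space $\Lambda$ of the open system'' is therefore off: the claim is full support on $[0,1]$, and the content of (v) is precisely the observation $a=0$.

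A secondary remark on method: for the inclusion ``$\subseteq$'' in (iv) you work through the eigenmeasure $\hat\nu$, while the paper instead writes out $\hL^n(\hat\rho)$ and reads off the support of $\hat\rho$ as $\bigcap_{k}\supp(g\circ T^k)$. Your route is arguably cleaner, since (iii) already asserts $\hat\rho>0$ (so $\supp\hat\rho=[0,1]$ directly), and your duality computation $\hat\lambda\,\hat\nu(B)=\int\mL_{\Phi}(\1_B)\,d\hat\nu=0$ together with $\hat\nu(T^{-1}C)=\hat\lambda^{-1}\int \1_C\,\mL_{\Phi}(\1)\,d\hat\nu$ is transparent. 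But both arguments become vacuous once one notices $B=\emptyset$; the real work in (iv)--(v) is the one-line observation you missed.
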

\begin{proof}
\begin{itemize}
\item[(i)] By definition we have
\begin{align*}
\hL (f)&=
\int_S\sum_{Ty=x}e^{\phi(y)}f(y)\1 (y,\omega)\, d\theta(\omega)
=\sum_{Ty=x}e^{\phi(y)}f(y)\mathbb{P}_{\theta}({\bf 1}(y,\omega)=1)\\&
=\mL_\phi(f\cdot g)=\mL_{\phi+\psi}(f).
\end{align*}

\item[(ii)]  We have
\[
0=g(x)=F_\theta(|x-x_0|) \qquad\iff\qquad |x-x_0|< a
\]
\item[(iii)] Follows from (i) and Proposition \ref{prop0}.
\item[(iv)] We notice that with $S_n\phi(z)=\sum_{k=0}^{n-1}\phi(T^kz)$
\[
\hL^n(f)=\sum_{T^ny=x}\left(e^{S_n\phi(y)}\prod_{k=0}^{n-1}g(T^ky)f(y)\right).
\]
Hence, for all $n\in\N$
\[
\supp\hat\rho=\supp\hL (\hat\rho)=\supp\hL^n(\hat\rho)=\bigcap_{k=0}^{n-1}\supp g\circ T^k.
\]
But
\[
\bigcap_{k=0}^\infty\left(\supp g\circ T^k\right)=\bigcap_{n\in\N}T^{-n}\left([0,1]\setminus (x_0-a,x_0+a)\right).
\]
\item[(v)] This follows from the previous arguments and the fact that the H\"older continuity of $\psi=\log g$ implies that $\supp g=[0,1]$.
\end{itemize}
\end{proof}
Using (iii) of Theorem \ref{main}, we have
\begin{corollary}\label{cor1}
The measure $\hat\alpha$ given by
$$\hat{\alpha}(A):=\frac{1}{\hat{\lambda}}\int_{X}\int_{S}{\bf 1}_A {\bf 1}_{I_{\omega}^c}\hat{\rho}d\theta dm$$
is a conditionally stationary measure which is equivalent to $m$. Its associated escape rate is given by $-\ln\hat\lambda$, where
$$\hat{\lambda}=\int_Sd\theta(\omega)\int_X \hat{\rho}{\bf 1}_{I_{\omega}^c}dm.$$
Moreover, for all $f\in \mathcal H^{\text{loc}}_{\alpha}$ we have
$$\lim_{n\to\infty}||\hat\lambda^{-n}\hL^nf-\hat\rho\int_I fd\hat\nu||_{\infty}=0.$$
\end{corollary}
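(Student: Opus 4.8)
The plan is to verify that the measure $\hat\alpha$ defined via the eigenfunction $\hat\rho$ and the eigenmeasure $\hat\nu$ of $\hL$ satisfies the conditionally stationary equation, and then deduce equivalence to $m$ and the convergence statement directly from part (iii) of Theorem \ref{main} together with Proposition \ref{prop0}. First I would unwind the definition: since $\hat\nu$ is (as in Proposition \ref{prop0}(iii), applied to the operator $\mL_{\phi+\psi}$) the $(\phi+\psi)$-conformal measure, and the ambient measure $m$ here plays the role of that conformal eigenmeasure, one writes $\int f\, d\hat\alpha=\frac{1}{\hat\lambda}\int_S\int f\,\1_{I_\omega^c}\,\hat\rho\, d\theta\, dm$. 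Then I would apply the defining functional equation of $\hat\alpha$ by testing against $\1_A$ and using the duality $\int \hL(h)\, dm=\hat\lambda\int h\, dm$ (which holds because $m=\hat\nu$ is the eigenmeasure of $\hL^\ast$): expanding $\hL$ via part (i) of Theorem \ref{main}, $\hL(h)=\mL_\phi(h\cdot g)=\int_S \mL_\phi(h\cdot\1(\cdot,\omega))\, d\theta$, and the change of variables $Ty=x$ restricted to injectivity branches converts $\int_S\int \1_A\,\1_{I_\omega^c}\,\hat\rho\, d\theta\, dm$ into $\hat\lambda\int_S \hat\alpha(T^{-1}A\cap T^{-1}I_\omega^c)\, d\theta$, which is exactly the conditionally stationary identity. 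The formula $\hat\lambda=\int_S d\theta\int_X\hat\rho\,\1_{I_\omega^c}\, dm$ then drops out by taking $A=[0,1]$ and using $\int\hat\rho\, dm=1$ (after normalizing $\hat\rho$ so that $\hat\alpha$ is a probability measure).

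For the equivalence to $m$: since $\hat\rho$ is strictly positive on $[0,1]$ by Theorem \ref{main}(iii), the density $x\mapsto \frac{1}{\hat\lambda}\left(\int_S \1_{I_\omega^c}(x)\, d\theta\right)\hat\rho(x)=\frac{1}{\hat\lambda}g(x)\hat\rho(x)$ is bounded above and, under the standing assumption $\theta(\{0\})>0$ that makes $\psi\in\mathcal H_\alpha$, we have $g>0$ everywhere, so the density is also bounded below by a positive constant; hence $\hat\alpha\sim m$. (If one only assumes $\supp\theta=[a,b]$ with $a>0$, then $\hat\alpha$ is equivalent to the restriction of $m$ to $\supp g$; I would state the hypothesis matching part (v) of Theorem \ref{main}, i.e. $g>0$.) The escape rate claim is then a matter of definition: $\hat\lambda$ as computed is the eigenvalue of $\hL$, and $-\ln\hat\lambda$ is declared the escape rate.

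Finally, the convergence $\|\hat\lambda^{-n}\hL^n f-\hat\rho\int_I f\, d\hat\nu\|_\infty\to 0$ is immediate: by Theorem \ref{main}(i) the operator $\hL$ equals $\mL_{\phi+\psi}$ with $\phi+\psi\in\mathcal H^{\text{loc}}_\alpha$, so Proposition \ref{prop0}(iv) applied to the potential $\phi+\psi$ gives exactly this spectral-gap convergence with $\lambda=\hat\lambda$, $\rho=\hat\rho$, $\nu=\hat\nu$. The main obstacle I anticipate is purely bookkeeping rather than conceptual: correctly matching the ambient measure $m$ with the conformal eigenmeasure $\hat\nu$ of $\hL^\ast$ (they must be identified for the duality $\int\hL(h)\,dm=\hat\lambda\int h\,dm$ to hold), and carefully justifying the interchange of the $\theta$-integral with the transfer-operator sum and the branch decomposition when verifying the conditionally stationary identity — Tonelli's theorem handles the former, and finiteness of the Markov partition handles the latter, but one must be attentive to the $\1_{I_\omega^c}$ factors cutting across Markov branches, which is precisely where the Assumption $\psi\in\mathcal H_\alpha$ is doing its work.
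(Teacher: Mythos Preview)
Your overall strategy matches the paper's: use the eigenfunction equation $\hL\hat\rho=\hat\lambda\hat\rho$ and the definition of $\hL$ to verify the conditionally stationary identity, and then invoke Proposition~\ref{prop0}(iv) for the spectral-gap convergence. The equivalence $\hat\alpha\sim m$ via $g>0$, $\hat\rho>0$ is also correct.

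However, there is a genuine gap in your identification of the reference measure. You set $m=\hat\nu$, the eigenmeasure of $\hL^\ast=\mL_{\phi+\psi}^\ast$, and appeal to the duality $\int\hL(h)\,dm=\hat\lambda\int h\,dm$. This is the wrong duality for the computation. What the argument actually needs is the \emph{$\mL_\phi$}-duality
\[
\int h\,\mL_\phi(f)\,dm=\int (h\circ T)\,f\,dm,
\]
i.e.\ $m$ must be the $\phi$-conformal measure (the eigenmeasure $\nu$ of $\mL_\phi^\ast$ from Proposition~\ref{prop0}; Lebesgue when $\phi=-\log|T'|$). With that choice one computes
\[
\hat\alpha(A)=\tfrac{1}{\hat\lambda}\!\int \mathbf 1_A\,g\,\hat\rho\,dm
=\tfrac{1}{\hat\lambda^2}\!\int \mathbf 1_A\,g\,\mL_\phi(g\hat\rho)\,dm
=\tfrac{1}{\hat\lambda^2}\!\int (\mathbf 1_A g)\!\circ\! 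T\cdot g\hat\rho\,dm
=\tfrac{1}{\hat\lambda}\!\int_S \hat\alpha(T^{-1}A\cap T^{-1}I_\omega^c)\,d\theta,
\]
which is the conditionally stationary equation. If instead $m=\hat\nu$, the same manipulation yields
$\hat\alpha(A)=\tfrac{1}{\hat\lambda}\int_S\int \mathbf 1_{T^{-1}A\cap T^{-1}I_\omega^c}\hat\rho\,d\hat\nu\,d\theta$, which is \emph{missing} the factor $g$ needed to reconstitute $\hat\alpha(T^{-1}A\cap T^{-1}I_\omega^c)$; the identity fails. So the obstacle you flagged is not merely bookkeeping: with $m=\hat\nu$ the measure $\hat\alpha$ as written is simply not conditionally stationary. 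The paper (implicitly) takes $m$ to be the $\phi$-conformal measure and uses $\mL_\phi^\ast m=m$ together with $\hL\hat\rho=\hat\lambda\hat\rho$; once you make that correction your outline goes through.
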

\begin{proof}
We $\hL \hat\rho=\hat\lambda \hat\rho$, with $\int_I \hat\rho dm=1$. By using the definition of $\hL$, it follows that:
$$
\hat\lambda=\int_{S} d\theta(\omega)\int_I \hat\rho\bold{1}_{I^c_{\omega}}dm.
$$

 By using again the fact that $\hL \hat\rho=\hat\lambda \hat\rho$, with $\int_I \hat\rho dm=1$ and the definition of $\hL$, we see that the above defined measure $\hat\alpha$ satisfies the following:
\begin{equation*}
\hat\alpha(A)=\frac{1}{\hat\lambda}\int_{S} d\theta(\omega)\hat\alpha(T^{-1}A\cap T^{-1}I^c_{\omega});
\end{equation*}
\begin{equation*}
\hat\lambda=\int_{S} d\theta(\omega)\hat\alpha(T^{-1}I^c_{\omega});
\end{equation*}
and
\begin{equation*}
{\hat\lambda}^n\hat\alpha(A)=\int_{\bar S}  d\theta^{\infty}(\bar{\omega})\hat\alpha(T^{-n}A\cap T^{-1}I^c_{\omega_1}\cap T^{-2}I^c_{\omega_2}\cap\cdots\cap T^{-n}I^c_{\omega_n}).
\end{equation*}
The fact that for all $f\in \mathcal H^{\text{loc}}_{\alpha}$ $$\lim_{n\to\infty}||\hat\lambda^{-n}\hL^nf-\hat\rho\int_I fd\hat\nu||_{\infty}=0$$
follows from Theorem \ref{main} and (iv) of Proposition \ref{prop1}.
\end{proof}
\begin{remark}
Since $\hat\alpha$ is equivalent to $m$, by  (i) of Theorem \ref{prop1}, for almost every $\omega$, $m$ almost every $x$ will not survive. This yields to the natural question: what is the `dimension' of the set of points that never escape through the random holes?
\end{remark}
%%%%%%%%%%%%%%%%%%%%%%%%%%%%%%%%%%%%%%%%%%%%%%%%%%%%%%%%%%%%%%%%%%%%%

\subsection{Properties of $\hat\mu_{\phi,\theta}$}

The relation between $\hL$ and $\mL_{\phi+\psi}$ yields that the associated measure $\hat\mu_{\phi,\theta}$ is the usual equilibrium state $\mu_{\phi +\psi}$, for the closed deterministic system $T$, with respect to the H\"older continuous potential $\phi +\psi$. It is well known (see for instance \cite{Ba}) that the measure $\mu_{\phi +\psi}$ has the Gibbs property
\[
C^{-1}<\frac{\mu_{\phi +\psi}([x_1\cdots x_n])}{\frac{e^{S_n\phi (x)+S_n\psi (x)}}{\sum_{[y_1\cdots y_n]}e^{S_n\phi (y)+S_n\psi (y)}}}<C
\]
where $C>1$ and $[z_1\cdots z_n]$ denotes the set of all numbers having $z_1\cdots z_n$ as their first dyadic digits. We also call $\hat\mu_{\phi,\theta}$ the measure of conditional surviving probability\footnote{Let us elaborate on calling $\hat\mu_{\phi,\theta}$ the measure of conditional surviving probability. Given a point $y\in [0,1]$ and a finite path of its trajectory $y_1, y_2,\cdots ,y_n$. Then $e^{S_n\phi (y)}$ is up to normalization the probability with respect to the equilibrium state $\mu_\phi$ of choosing this particular path. On the other hand $e^{S_n\psi (y)}$ is up to normalization the probability that $y$ survives along this particular path. Since choosing the path is independent of activating the holes we get
\begin{align*}
\hat\mu_{\phi,\theta}([y_1\cdots y_n])&\sim \frac{\mu_\phi([y_1\cdots y_n])\cdot\P_\theta(y \text{ survives})}{\sum_{[y_1\cdots y_n]}\mu_\phi([y_1\cdots y_n])\cdot\P_\theta(y \text{ survives})}\\&\sim \frac{\mu_\phi([y_1\cdots y_n])\cdot\P_\theta(y \text{ survives})}{\E_{\mu_\phi}(y \text{ survives up till time $n$})}.
\end{align*}
The expectation in the denominator decays exponentially in $n$. Hence the measure $\hat\mu_{\phi,\theta}([y_1\cdots y_n])$ is the conditional probability
of a cylinder to be chosen and surviving rescaled with the average probability of a cylinder of length $n$ to survive under the condition that all trajectories survive until time $n$.}.

%%%%%%%%%%%%%%%%%%%%%%%%%%%%%%%%%%%%%%%%%%%%%%%%%%%%%%%%%%%%%%%%%%%%%%%%%%%%%%

\subsection{Asymptotic behaviour and the Hausdoff dimension}
When the set\footnote{Recall that in Remark \ref{non-empty} it is verified that for $T(x)=2x$ the set $\S\ne\emptyset$.} $\S\ne\emptyset$, we consider three families of potentials: $t(\phi +\psi)$, $\phi+t\psi$ and $t\phi +T(t)\psi$ where in the latter case $T(t)$ is defined by $\sup_{\nu -\text{ ergodic}}\{h_\nu+\int_{[0,1]}(t\phi+T(t)\psi)\, d\nu\}=0$. We are interested in the asymptotics of the corresponding equilibrium states as $t\to\infty$.

\begin{theorem}
Any accumulation measure $\mu_\infty^{\S}$ of $\mu_{t(\phi +\psi)}$ as $t\to\infty$ has support on the set of trajectories with the largest possible Birkhoff average of $\phi +\psi$, i.e. for $\mu_\infty^{\S}$-a.e. $x$ the Birkhoff average $\lim_{n\to\infty}\sum_{k=0}^{n-1}\phi(T^k x)+\psi(T^kx)$ is maximal. Moreover it has maximal entropy on this set.
\end{theorem}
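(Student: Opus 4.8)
The plan is to use the standard variational/large-deviations machinery for equilibrium states of H\"older potentials over the subshift of finite type $(\Sigma,\sigma)$ semi-conjugate to $(I,T)$, applied to the family $t(\phi+\psi)$ as $t\to\infty$. Write $\Phi:=\phi+\psi\in\mathcal H_\alpha$, let $P(s)$ denote the topological pressure of $s\Phi$, and let $\beta^\ast:=\sup\{\int\Phi\,d\nu:\nu\text{ is }\sigma\text{-invariant}\}$ be the maximal possible Birkhoff average of $\Phi$ (a finite number since $\Phi$ is bounded, and attained by compactness of the space of invariant measures and upper semi-continuity of $\nu\mapsto\int\Phi\,d\nu$). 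First I would record the basic asymptotics $\lim_{t\to\infty} P(t\Phi)/t=\beta^\ast$: the inequality $P(t\Phi)\ge t\int\Phi\,d\nu$ for any $\nu$ gives $\liminf\ge\beta^\ast$, and the variational principle $P(t\Phi)=\sup_\nu\{h_\nu+t\int\Phi\,d\nu\}\le h_{\mathrm{top}}+t\beta^\ast$ gives $\limsup\le\beta^\ast$.

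Next I would take any weak-$\ast$ accumulation point $\mu_\infty^{\S}$ of $\mu_{t\Phi}$ along a sequence $t_k\to\infty$; such accumulation points exist by compactness, and they are $\sigma$-invariant (hence $T$-invariant). To see that $\int\Phi\,d\mu_\infty^{\S}=\beta^\ast$, use that $\mu_{t\Phi}$ is the equilibrium state, so $h_{\mu_{t\Phi}}+t\int\Phi\,d\mu_{t\Phi}=P(t\Phi)\ge t\beta^\ast$; dividing by $t$ and using $0\le h_{\mu_{t\Phi}}\le h_{\mathrm{top}}<\infty$ yields $\int\Phi\,d\mu_{t\Phi}\ge\beta^\ast-h_{\mathrm{top}}/t\to\beta^\ast$. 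Passing to the limit along $t_k$ and invoking upper semi-continuity of $\nu\mapsto\int\Phi\,d\nu$ (equality here, since $\Phi$ is continuous) gives $\int\Phi\,d\mu_\infty^{\S}=\beta^\ast$. This already shows $\mu_\infty^{\S}$ is a maximizing measure; since ergodic decomposition of a maximizing measure consists of maximizing ergodic measures, and the set of maximizing measures is a face of the simplex of invariant measures supported on the (closed, $T$-invariant) maximizing set $\mathcal M_{\max}:=\{x:\lim_n\frac1n S_n\Phi(x)=\beta^\ast\}$, we conclude $\supp\mu_\infty^{\S}\subseteq\overline{\mathcal M_{\max}}$ and that the Birkhoff average of $\Phi$ equals $\beta^\ast$ for $\mu_\infty^{\S}$-a.e.\ $x$.

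For the entropy statement, I would argue that $\mu_\infty^{\S}$ maximizes $h_\nu$ among all invariant $\nu$ with $\int\Phi\,d\nu=\beta^\ast$. Let $\nu$ be any such measure. Then $P(t\Phi)\ge h_\nu+t\beta^\ast$, while $P(t\Phi)=h_{\mu_{t\Phi}}+t\int\Phi\,d\mu_{t\Phi}\le h_{\mu_{t\Phi}}+t\beta^\ast$ (using $\int\Phi\,d\mu_{t\Phi}\le\beta^\ast$). Subtracting, $h_\nu\le h_{\mu_{t\Phi}}$ for every $t$; taking $t=t_k\to\infty$ and using upper semi-continuity of the entropy map $\nu\mapsto h_\nu$ on the SFT (so $\limsup_k h_{\mu_{t_k\Phi}}\le h_{\mu_\infty^{\S}}$) gives $h_\nu\le h_{\mu_\infty^{\S}}$. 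Thus $\mu_\infty^{\S}$ has maximal entropy among $\Phi$-maximizing measures, i.e.\ maximal entropy on the set of trajectories realizing the largest Birkhoff average of $\phi+\psi$. Finally I would transfer these statements from $(\Sigma,\sigma)$ back to $(I,T)$ through the semi-conjugacy, noting as in the remark after the Markov-map axioms that the exceptional set where the semi-conjugacy fails to be injective is countable and carries no mass for any of the measures involved, so the entropy and Birkhoff-average assertions are unaffected.

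The main obstacle is purely a matter of care rather than of depth: one must make sure the semi-continuity inputs are the correct ones (entropy is \emph{upper} semi-continuous on an SFT, integration against the \emph{continuous} potential $\Phi$ is continuous in the weak-$\ast$ topology) and that ``support on the set of trajectories with largest Birkhoff average'' is interpreted in the almost-everywhere sense — an accumulation measure need not literally be supported on the (possibly non-closed) level set $\mathcal M_{\max}$, only on its closure, with full measure on $\mathcal M_{\max}$ itself. A secondary technical point is confirming that $\psi\in\mathcal H_\alpha$ (guaranteed by the Ahlfors upper semi-regularity hypothesis together with $\theta(\{0\})>0$, via the earlier theorem) so that $\Phi=\phi+\psi$ is genuinely a H\"older potential to which Proposition \ref{prop0} and the thermodynamic formalism apply; and that $\S\ne\emptyset$ is needed only to ensure the statement is non-vacuous.
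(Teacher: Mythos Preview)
Your proposal is correct and follows the same approach as the paper: the paper's proof consists of a single citation to standard results in ergodic optimization (Jenkinson \cite{Je}), and what you have written is precisely a careful unwinding of those standard results via the variational principle, the asymptotics $P(t\Phi)/t\to\beta^\ast$, and upper semi-continuity of entropy on the SFT. Your added caveats about the a.e.\ interpretation of ``support'' and the semi-conjugacy are appropriate and do not diverge from the paper's intent.
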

\begin{proof}
The statement of the theorem follows from standard results in ergodic optimization \cite{Je}. Any accumulation point maximizes the integral $\int_{[0,1]}(\phi+\psi)\, d\nu$ and is hence supported on the set with maximal possible ergodic averages. Moreover, any such accumulation point has maximal entropy of all invariant measures supported on this set.
\end{proof}

\begin{remark}
The maximal Birkhoff average of $\phi +\psi$ can be interpreted in the following way: the maximal averages are obtained on the set whose generic $\mu_\phi$ trajectory will have the highest joint probability of being chosen and of survival. Then this maximizing measure $\mu_\infty^{\S}$ is concentrated on the set of trajectories where the probability of being chosen and of survivinal is balanced in the highest way. 
\end{remark}

\begin{theorem}
Any accumulation measure $\mu_\infty^{\S}$ of $\mu_{(\phi +t\psi)}$ as $t\to\infty$ has support on $\S$. Moreover,
\[
\sup\{h_\nu+\int_{[0,1]}\phi\, d\nu\, :\, \text{$\nu$- invariant and } \nu(\S)=1\}=h_{\mu_\infty^{\S}}+\int_{[0,1]}\phi\, d\mu_\infty^{\S}.
\]
Therefore it is an equilibrium state with respect to the potential $\phi$ on the surviving set $\S$.
\end{theorem}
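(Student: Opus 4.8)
The plan is to identify the accumulation measures $\mu_\infty^{\S}$ of the family $\mu_{\phi+t\psi}$ as a variational problem and then argue that the variational problem converges, as $t\to\infty$, to the one defining equilibrium states of $\phi$ restricted to invariant measures carried by $\S$. The starting observation is that, by Theorem \ref{main}(i) and Proposition \ref{prop0}(iv), for each finite $t$ the measure $\mu_{\phi+t\psi}$ is the equilibrium state for the H\"older potential $\phi+t\psi$; in particular, writing $P(\cdot)$ for the topological pressure of $T$, it is the unique invariant measure with
\[
h_{\mu_{\phi+t\psi}}+\int_{[0,1]}(\phi+t\psi)\, d\mu_{\phi+t\psi}=P(\phi+t\psi)=\sup_{\nu}\Big\{h_\nu+\int_{[0,1]}(\phi+t\psi)\, d\nu\Big\}.
\]
Since $\psi=\log g\le 0$ (as $g\le 1$), the function $t\mapsto P(\phi+t\psi)$ is non-increasing, and $-\infty<P(\phi)-t\|\psi\|_\infty\le P(\phi+t\psi)$ when $\psi$ is bounded; more to the point, restricting the supremum to the (nonempty, since $\S\ne\emptyset$ and $\S$ contains invariant sets by the Theorem of Section 4) set of invariant measures $\nu$ with $\nu(\S)=1$, every such $\nu$ gives $\int\psi\, d\nu=0$ because $\psi$ vanishes precisely on $\bigcap_n T^{-n}([0,1]\setminus(x_0-a,x_0+a))\supseteq$ the core of $\S$, and hence $P(\phi+t\psi)\ge \sup\{h_\nu+\int\phi\, d\nu:\nu(\S)=1\}=:P_\S(\phi)$ for all $t$. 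This gives a $t$-uniform lower bound on the pressures.

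Next I would extract a weak-$*$ accumulation point $\mu_\infty^{\S}$ along some $t_k\to\infty$; such a limit exists by compactness of the space of $T$-invariant probability measures, and it is automatically $T$-invariant. The first claim, $\mu_\infty^{\S}(\S)=1$, follows from the fact that $\int\psi\, d\mu_{\phi+t_k\psi}\to 0$: indeed, from $h_{\mu_{\phi+t_k\psi}}+\int\phi\, d\mu_{\phi+t_k\psi}+t_k\int\psi\, d\mu_{\phi+t_k\psi}=P(\phi+t_k\psi)\ge P_\S(\phi)$ and $h_{\mu_{\phi+t_k\psi}}+\int\phi\, d\mu_{\phi+t_k\psi}\le P(\phi)<\infty$, one gets $0\ge t_k\int\psi\, d\mu_{\phi+t_k\psi}\ge P_\S(\phi)-P(\phi)$, so $\int\psi\, d\mu_{\phi+t_k\psi}\to 0$ at rate $O(1/t_k)$. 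Since $\psi\le 0$ is upper semicontinuous (it is continuous on the coarser-partition cells and $-\infty$ cannot occur here because $\psi$ is H\"older by the Assumption, but in general one uses $g\ge 0$ continuous and $\psi=\log g$), weak-$*$ convergence gives $\int\psi\, d\mu_\infty^{\S}\ge\limsup_k\int\psi\, d\mu_{\phi+t_k\psi}=0$, forcing $\int\psi\, d\mu_\infty^{\S}=0$; as $\psi<0$ off the closed set $\bigcap_n T^{-n}([0,1]\setminus(x_0-a,x_0+a))$, this measure is supported there, and thus $\mu_\infty^{\S}(\S)=1$.

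For the variational identity, combine upper semicontinuity of entropy (valid for these piecewise expanding Markov maps: $\nu\mapsto h_\nu$ is u.s.c. on invariant measures) with the pressure inequality. On one hand, for every invariant $\nu$ with $\nu(\S)=1$ we have $\int\psi\, d\nu=0$, so $h_\nu+\int\phi\, d\nu=h_\nu+\int(\phi+t\psi)\, d\nu\le P(\phi+t\psi)$ for all $t$; letting $\mu_\infty^{\S}$ play the role of the maximizer and passing to the limit gives $h_{\mu_\infty^{\S}}+\int\phi\, d\mu_\infty^{\S}\ge\limsup_k\big(h_{\mu_{\phi+t_k\psi}}+\int\phi\, d\mu_{\phi+t_k\psi}\big)$ by u.s.c.\ of entropy and continuity of $\int\phi$, while $h_{\mu_{\phi+t_k\psi}}+\int\phi\, d\mu_{\phi+t_k\psi}=P(\phi+t_k\psi)-t_k\int\psi\, d\mu_{\phi+t_k\psi}\ge P_\S(\phi)$. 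Hence $h_{\mu_\infty^{\S}}+\int\phi\, d\mu_\infty^{\S}\ge P_\S(\phi)=\sup\{h_\nu+\int\phi\, d\nu:\nu(\S)=1\}$; the reverse inequality is immediate since $\mu_\infty^{\S}$ is itself admissible ($\mu_\infty^{\S}(\S)=1$). Therefore $\mu_\infty^{\S}$ is an equilibrium state for $\phi$ among invariant measures supported on $\S$.

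The main obstacle I anticipate is the interchange of limits hidden in ``$h_{\mu_{\phi+t_k\psi}}+\int\phi\,d\mu_{\phi+t_k\psi}\ge P_\S(\phi)$'': this requires knowing that $P(\phi+t\psi)\ge P_\S(\phi)$ for all $t$, which in turn rests on $\int\psi\, d\nu=0$ for $\nu(\S)=1$, i.e.\ on the precise identification $\{\psi=0\}=\bigcap_n T^{-n}([0,1]\setminus(x_0-a,x_0+a))$ together with the fact that this set supports invariant measures (nonempty by the hypothesis $\S\ne\emptyset$ and the topological description of Section 4). A secondary technical point is justifying upper semicontinuity of the entropy map and of $\psi$ in the weak-$*$ limit; for the Markov maps considered here this is standard, but it should be invoked with care since $\psi$ is only H\"older on the coarser partition, not globally continuous — one uses that $g$ is genuinely continuous (Theorem on H\"older continuity of $g$) and $\psi=\log g$ is u.s.c.\ with values in $[-\infty,0]$, which is all that is needed for the $\limsup$ estimate.
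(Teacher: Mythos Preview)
Your strategy is essentially the paper's: both hinge on the variational principle for $\phi+t\psi$, the sign condition $\psi\le 0$, and the fact that $\int\psi\,d\nu=0$ for invariant measures carried by the surviving core. The paper packages the variational identity as a proof by contradiction (assume some $\nu$ on $\S$ strictly beats $\mu_\infty^{\S}$, then violate the variational principle at large $t$), whereas you argue directly via upper semicontinuity of entropy; your explicit sandwich $0\ge t_k\int\psi\,d\mu_{\phi+t_k\psi}\ge P_\S(\phi)-P(\phi)$ is in fact cleaner than the paper's appeal to ergodic optimization, and your invocation of upper semicontinuity of the entropy map makes transparent a step the paper leaves implicit.

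One correction, though: you repeatedly identify the zero set of $\psi$ using $a$, writing $\bigcap_n T^{-n}([0,1]\setminus(x_0-a,x_0+a))$. This is wrong. Since $g(x)=F_\theta(|x-x_0|)$, one has $\psi(x)=0$ iff $|x-x_0|\ge b$, where $b=\sup\operatorname{supp}\theta$; under the standing assumption $\theta(\{0\})>0$ one has $a=0$, so your set is all of $[0,1]$. The correct invariant zero locus is $\tilde{\S}:=\bigcap_n T^{-n}([0,1]\setminus(x_0-b,x_0+b))$, and the implication ``$\nu(\S)=1\Rightarrow\int\psi\,d\nu=0$'' rests on the additional observation (which the paper spells out and you elide) that any invariant measure on $\S=\bigcup_m T^{-m}\tilde{\S}$ is in fact concentrated on $\tilde{\S}$, by invariance and monotonicity of $T^{-m}\tilde{\S}$. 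With $a$ replaced by $b$ and this point supplied, your argument goes through.
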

\begin{proof}
By the variational principle
\begin{equation*}
\begin{split}
&\sup_{\nu -\text{ invariant}}\left\{h_\nu+\int_{[0,1]}\phi\, d\nu +t\int_{[0,1]}\psi\, d\nu\right\}\\
&\hskip 4cm=h_{\mu_{\phi+t\psi}}+\int_{[0,1]}\phi\, d\mu_{\phi+t\psi} +t\int_{[0,1]}\psi\, d\mu_{\phi+t\psi}.
\end{split}
\end{equation*}
Since $\|\phi\|<\infty$ and $h_\nu\le\log 2$ it follows that $\mu_\infty^{\S}$ maximizes (over all invariant measures) the integral $\int_{[0,1]}\psi\, d\nu$. Let
\[
\tilde{\S}:=\bigcap_{n\in\N}T^{-n}([0,1]\setminus(x_0-b,x_0+b)).
\]
This set is the set of points that never (not only in finite time) enter the critical region and hence have surviving probability 1. We also remark that any invariant measure on $\S$ is indeed concentrated on $\tilde{\S}$.
 Since $\tilde{\S}\ne\emptyset$ and $\tilde{\S}$ is compact and forward invariant there is a maximizing measure $\nu$ supported on $\tilde{\S}$ with $\int_{[0,1]}\psi\, d\nu=0$. Let $B\cap\tilde{\S}=\emptyset$. Then $\mu_\infty^{\S}(B)=0$. Otherwise since $g\vert_B<1$ the integral of $\psi$ would be negative. Hence $\mu_\infty^{\S}(\tilde{\S})=1$ and consequently $\mu_\infty^{\S}(\S)=1$.

Assume that there is an invariant (and hence also an ergodic) measure $\nu$ supported on $\S$ and an $\e >0$ with
\begin{equation}\label{ass}
h_\nu+\int_{[0,1]}\phi\, d\nu > h_{\mu_\infty^{\S}}+\int_{[0,1]}\phi\, d\mu_\infty^{\S}+\e.
\end{equation}
Since $\int_{[0,1]}\psi d\nu=0$, we have
\begin{equation}\label{f1}
h_\nu+\int_{[0,1]}\phi\, d\nu +t\int_{[0,1]}\psi\, d\nu =h_\nu+\int_{[0,1]}\phi\, d\nu.
\end{equation}
Using \eqref{ass} and \eqref{f1}, we get
\begin{equation}\label{f2}
h_\nu+\int_{[0,1]}\phi\, d\nu +t\int_{[0,1]}\psi\, d\nu> h_{\mu_\infty^{\S}}+\int_{[0,1]}\phi\, d\mu_\infty^{\S}+\e.
\end{equation}
Then for sufficiently large $t$, and by using the fact that $t\int_{[0,1]}\psi\, d\mu_{\phi+t\psi}\le 0$, we have
\begin{align*}
h_\nu+\int_{[0,1]}\phi\, d\nu +t\int_{[0,1]}\psi\, d\nu& >h_{\mu_{\phi+t\psi}}+\int_{[0,1]}\phi\, d\mu_{\phi+t\psi}+\frac{\e}{2}\\
&\ge h_{\mu_{\phi+t\psi}}+\int_{[0,1]}\phi\, d\mu_{\phi+t\psi}+t\int_{[0,1]}\psi\, d\mu_{\phi+t\psi}+\frac{\e}{2}.
 \end{align*}
This contradicts the variational principle.
\end{proof}

For the interpretation of the last family we consider the symbolic representation $\Sigma=\{0,1\}^\N$ corresponding to the dyadic expansions of the real numbers in the interval. We introduce a metric on $\Sigma$ by
\[
\diam ([x_1\cdots x_n]):=S_n\tilde\psi (x)
\]
where $\tilde\psi:=\log \frac{g}{\int_{[0,1]}g\, dx}$. Then this space $\Sigma$ becomes a Cantor set. The pointwise dimension $$d_{\phi,\theta}(x)=\frac{\log\mu_\phi([x_1\cdots x_n])}{\diam ([x_1\cdots x_n])}$$ at a point $x$ is a measure of the deviation from the expected dying out probability of a given path.

From standard multifractal analysis \cite{Pe} we know
\begin{theorem}
\[
\dim_H\left\{x:\, \, d_{\phi,\theta}(x)=-T'(t)\right\}=T(t)+T'(t)\cdot t=\dim_H\mu_{t\phi +T(t)\psi}.
\]
\end{theorem}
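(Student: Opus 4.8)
The statement is an instance of the multifractal formalism --- the $f(\alpha)$-spectrum for pointwise dimensions --- applied to the Gibbs measure $\mu_\phi$ and the metric that $\tilde\psi$ induces on the symbolic space: the three quantities in the display are, respectively, the Hausdorff dimension of a level set, the value of the temperature (Legendre) function, and the dimension of the associated natural measure, and they are tied together by the standard Legendre duality $f(-T'(t))=T(t)+tT'(t)=\dim_H\mu_{t\phi+T(t)\psi}$. The plan is to reduce to the abstract setting of \cite{Pe} (Chapter~7) and make the two non-trivial inclusions explicit. Before anything else I would fix the bookkeeping: by the semiconjugacy recalled in Section~\ref{maps} it suffices to work on the full shift $\Sigma=\{0,1\}^{\N}$ (a mixing subshift of finite type in the general case), and one normalises $\phi$ so that its topological pressure vanishes and reconciles $\psi$ with $\tilde\psi$ (they differ by the additive constant $-\log\int_{[0,1]}g\,dx$, which changes neither the equilibrium states nor their dimensions), so that all three identities become mutually consistent. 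With this done, the Gibbs property of $\mu_\phi$ --- a consequence of the Ruelle--Perron--Frobenius theorem for the closed system, Proposition~\ref{prop0} --- namely $C^{-1}\le\mu_\phi([x_1\cdots x_n])e^{-S_n\phi(x)}\le C$, gives
\[
d_{\phi,\theta}(x)=\lim_{n\to\infty}\frac{\log\mu_\phi([x_1\cdots x_n])}{S_n\tilde\psi(x)}=\lim_{n\to\infty}\frac{S_n\phi(x)}{S_n\tilde\psi(x)}
\]
whenever the two Birkhoff averages converge: the level sets in the theorem are thus level sets of a ratio of Birkhoff sums, exactly the objects handled by the conditional variational principle. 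I would also record that $T(t)$ is the unique root of the pressure equation $P(t\phi+T(t)\tilde\psi)=0$, that $(s,u)\mapsto P(s\phi+u\tilde\psi)$ is real-analytic and strictly monotone in $u$ (provided $\tilde\psi$, equivalently $g$, is not cohomologous to a constant; if it is, the theorem is trivial), and hence, by the implicit function theorem, that $T$ is well-defined, real-analytic and convex.

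For the lower bound I would exhibit the natural measure $\mu_t:=\mu_{t\phi+T(t)\tilde\psi}=\mu_{t\phi+T(t)\psi}$, the equilibrium state of the closed system for the H\"older potential $t\phi+T(t)\tilde\psi$ provided by Proposition~\ref{prop0} and Theorem~\ref{main}. By Birkhoff's theorem, for $\mu_t$-a.e.\ $x$ one has $\frac1nS_n\phi(x)\to\int\phi\,d\mu_t$ and $\frac1nS_n\tilde\psi(x)\to\int\tilde\psi\,d\mu_t$, so $d_{\phi,\theta}(x)$ equals that ratio; implicitly differentiating $P(t\phi+T(t)\tilde\psi)=0$ in $t$ (using $\partial_sP(s\phi+u\tilde\psi)=\int\phi\,d\mu$ and $\partial_uP(s\phi+u\tilde\psi)=\int\tilde\psi\,d\mu$ for the corresponding equilibrium state $\mu$) gives $T'(t)=-\int\phi\,d\mu_t\big/\int\tilde\psi\,d\mu_t$, so that $d_{\phi,\theta}(x)=-T'(t)$ for $\mu_t$-a.e.\ $x$; that is, $\mu_t$ is carried (mod $0$) by $\{x:\ d_{\phi,\theta}(x)=-T'(t)\}$. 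Next, by the entropy--exponent (Young) dimension formula for $\mu_t$ in the $\tilde\psi$-metric, combined with the variational expression $h_{\mu_t}=-t\int\phi\,d\mu_t-T(t)\int\tilde\psi\,d\mu_t$ coming from $P(t\phi+T(t)\tilde\psi)=0$ and the formula for $T'(t)$, a short computation yields $\dim_H\mu_t=T(t)+tT'(t)$. Since $\mu_t$ lives on the level set, this simultaneously gives the third equality of the theorem and the lower bound $\dim_H\{x:\ d_{\phi,\theta}(x)=-T'(t)\}\ge T(t)+tT'(t)$.

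For the upper bound I would run the usual covering argument. Fix $\alpha=-T'(t)$, $\varepsilon>0$, $N\in\N$, and let $E_N$ be the set of $x$ in the level set with $\big|\tfrac{\log\mu_\phi([x_1\cdots x_n])}{S_n\tilde\psi(x)}-\alpha\big|<\varepsilon$ for all $n\ge N$; the level set is $\bigcup_NE_N$. Covering $E_N$ by cylinders $C$ of generations $\ge N$ with $\diam C$ small and estimating $\sum_C(\diam C)^s$ for $s$ slightly above $T(t)+tT'(t)$: on $E_N$ one may replace $(\diam C)^{\alpha}$ by $\mu_\phi(C)$ (level-set estimate, up to a factor $e^{O(\varepsilon n)}$) and then $\mu_\phi(C)$ by $e^{S_n\phi(x_C)}$ (Gibbs), after which the sum is comparable to a partition function $\sum_{|C|=n}e^{S_n(t\phi+T(t)\tilde\psi)(x_C)}\asymp e^{nP(t\phi+T(t)\tilde\psi)}$, which is subexponential by the pressure equation; letting $\diam C\to0$ and then $\varepsilon\to0$ gives $\mathcal H^{s}(E_N)=0$, hence $\dim_H\{x:\ d_{\phi,\theta}(x)=-T'(t)\}\le T(t)+tT'(t)$. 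Together with the previous paragraph this proves the theorem; alternatively, once the reductions of the first paragraph are in place one may simply invoke the multifractal formalism for Gibbs measures on subshifts carrying a H\"older metric (\cite{Pe}).

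The main obstacle is the covering estimate when $t$ has the ``unfavourable'' sign --- i.e.\ when $\alpha=-T'(t)$ lies on the side of the typical value $\dim_H\mu_\phi$ on which the exponent of $\mu_\phi(C)$ in the partition sum becomes negative and the geometric series is no longer manifestly summable; the standard remedy is to exploit the convexity of $T$ and argue with the full two-parameter pressure $P(t\phi+u\tilde\psi)$, equivalently to use the conditional-variational-principle construction of a Moran cover tailored to $\mu_t$ (which is precisely what is packaged in \cite{Pe}). A secondary, bookkeeping-level obstacle is to fix the conventions so that everything is consistent: one must check that $\tilde\psi=\log g-\log\int_{[0,1]}g\,dx$ is bounded --- i.e.\ $g$ is bounded away from $0$, which by the earlier theorem holds precisely when $\theta(\{0\})>0$ --- and that the $\tilde\psi$-induced ``metric'' is read with the sign that makes cylinder diameters shrink (note that $g\equiv1$, hence $\tilde\psi$ is constant, on the set $\widetilde{\S}=\bigcap_{n}T^{-n}([0,1]\setminus(x_0-b,x_0+b))$, so cylinders intersecting $\widetilde{\S}$ are genuinely contracted only for one choice of that sign); one must also ensure $\tilde\psi$ is not cohomologous to a constant, so that the range of attainable exponents $-T'(t)$, and with it the spectrum, is non-degenerate (recall, by Remark~\ref{non-empty}, that $\S\ne\emptyset$ is needed for the statement to be non-vacuous); finally the countable set of boundary points of the semiconjugacy $\Sigma\to([0,1],T)$ has zero Hausdorff dimension and is discarded.
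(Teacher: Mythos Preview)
Your proposal is correct in outline and is, in fact, considerably more detailed than what the paper offers: the paper gives no proof of this theorem at all, but simply records it as a consequence of ``standard multifractal analysis \cite{Pe}'' and states the result. What you have written is precisely an unpacking of that citation --- the reduction to a ratio of Birkhoff sums via the Gibbs property, the lower bound by exhibiting the equilibrium state $\mu_{t\phi+T(t)\tilde\psi}$ and computing its local dimension with the implicit-function/pressure calculation, and the upper bound via the Moran-cover/partition-function estimate --- so your approach and the paper's are the same in spirit, with yours being the explicit version.

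One useful by-product of your write-up is that you flag bookkeeping issues the paper leaves implicit: the passage between $\psi$ and $\tilde\psi$ (which differ by an additive constant and hence share equilibrium states and pressure roots), the sign convention needed so that $\diam([x_1\cdots x_n])$ actually tends to zero, the non-cohomology-to-a-constant hypothesis on $\tilde\psi$ ensuring a non-degenerate spectrum, and the requirement $\theta(\{0\})>0$ so that $g$ is bounded away from zero and $\tilde\psi$ is H\"older. These are exactly the verifications one must make before invoking \cite{Pe}, and the paper does not spell them out; your noting them is appropriate and does not indicate a gap in your argument.
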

%%%%%%%%%%%%%%%%%%%%%%%%%%%%%%%%%%%%%%%%%%%%%%%%%%%%%%%%%%%%%%%%%%%%%%%%%%%%%
\section{Examples}
In the previous sections, we studied the dimension of the survival set under the condition that $\theta(\{0\})>0$. To interpret what this means, one can imagine that the random holes are possible `gates' where orbits under the dynamics of the map $T$ escape. The condition $\theta(\{0\})>0$ means that in certain situations \emph{all gates are closed}, and thus no orbit escape. In fact this interpretation explains that the condition $\theta(\{0\})>0$ provides interesting dynamics that cannot happen in a deterministic setting such as in the systems studied in \cite{LM}. Moreover, we believe that the condition $\theta(\{0\})>0$ may provide interesting examples of survival sets that are `fat'. Having said that, it would be also natural to ask what happens when $\theta(\{0\})=0$? We present some examples to shed some light on this direction. Since in the examples below, the measure $\theta$ is discrete, unlike, in Theorem \ref{main}, we first state an analogous result to that of Theorem \ref{main} in a simple discrete case.\\

\noindent $\bullet$ There are $K$ holes, $K\ge 2$, denoted by $\{H_{\omega_i}\}_{i=1}^{K}$ and selected independently according to $\{p_i\}_{i=1}^K$; i.e., $p_i>0$ and $\sum_{i=1}^{K}p_i=1$;\\
$\bullet$ $\{H_{\omega_i}\}_{i=1}^{K-1}$ are elements of the Markov partition of $T$;\\
$\bullet$ $H_K=\emptyset$ (the case of no hole\footnote{This is equivalent to the condition $\theta(\{0\})>0$ in the continuous noise case.});\\
$\bullet$ $\tilde{\mathcal{H}}_{\alpha}^{\text{loc}}:=\{\text{Locally H\"older functions w.r.t. the Markov partition of } T\}$.\\

The following result leeds to an analogous conclusion as that of Theorem \ref{main}.
\begin{proposition}\label{prop2}
Let $\phi\in\tilde{\mathcal{H}}_{\alpha}^{\text{loc}}$. Then for any $f\in\tilde{\mathcal{H}}_{\alpha}^{\text{loc}}$ we have $\hL (f)=\mL_{\phi +\psi} (f)$, with $\psi\in\tilde{\mathcal{H}}_{\alpha}^{\text{loc}}$.
\end{proposition}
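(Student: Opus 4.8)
The plan is to mimic, in the discrete setting, the computation that proved part (i) of Theorem \ref{main}, the only difference being that the integral over the noise space $S$ is replaced by a finite sum weighted by the probabilities $p_i$. First I would write down explicitly the random indicator in this setting: for the $i$-th hole $H_{\omega_i}$ set $\1_i(x)=1$ if $x\notin H_{\omega_i}$ and $0$ otherwise, so that the averaged transfer operator becomes
\[
\hL(f)(x)=\sum_{i=1}^{K}p_i\,\mL_\phi\!\left(f\cdot\1_i\right)(x)
=\sum_{Ty=x}e^{\phi(y)}f(y)\sum_{i=1}^{K}p_i\1_i(y).
\]
Then I would define $g(x):=\sum_{i=1}^{K}p_i\1_i(x)=\P\big(x\notin H_{\omega}\big)$ and $\psi:=\log g$, exactly as in Section 5, and observe that the displayed identity is precisely $\hL(f)=\mL_\phi(f\cdot g)=\mL_{\phi+\psi}(f)$, using that $e^{\psi}=g$ pointwise on the set where $g>0$.

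The second step is to verify the regularity claim, namely that $\psi\in\tilde{\mathcal H}_\alpha^{\text{loc}}$, the space of functions locally H\"older with respect to the Markov partition of $T$. Here the key structural input is the hypothesis that each $H_{\omega_i}$ for $i=1,\dots,K-1$ is an element of the Markov partition: consequently each $\1_i$ is constant (equal to $0$ or $1$) on every partition element, hence so is the finite sum $g$, so $g$ is locally constant with respect to the Markov partition and in particular trivially locally H\"older. Moreover, because $H_K=\varnothing$ we have $p_K>0$ contributing $1$ to $g$ on every element, so $g\ge p_K>0$ everywhere; therefore $\psi=\log g$ is well-defined, bounded, and also locally constant with respect to the partition, so $\psi\in\tilde{\mathcal H}_\alpha^{\text{loc}}$. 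I would also note that $f\in\tilde{\mathcal H}_\alpha^{\text{loc}}$ implies $f\cdot g\in\tilde{\mathcal H}_\alpha^{\text{loc}}$ and that $\mL_\phi$ preserves this space (by the same argument underlying Proposition \ref{prop0}, now for the partition-adapted space), so $\hL$ indeed maps $\tilde{\mathcal H}_\alpha^{\text{loc}}$ into itself, which legitimizes the statement.

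There is essentially no serious obstacle here: unlike in Theorem \ref{main}, where Ahlfors upper semi-regularity was needed precisely to force the distribution function $F_\theta$ to be H\"older (the nontrivial analytic point), in the discrete case the alignment of the holes with the Markov partition makes $g$ and $\psi$ locally constant, so the H\"older estimate is immediate. The only mild point to be careful about is the distinction between H\"older continuity with respect to the coarser partition $\mathcal G$ used in Section 2 and H\"older continuity with respect to the finer Markov partition $\mathcal P$ of $T$: since the holes need not be unions of $\mathcal G$-atoms, $g$ may genuinely fail to lie in $\mathcal H_\alpha^{\text{loc}}$ but does lie in $\tilde{\mathcal H}_\alpha^{\text{loc}}$, which is why the proposition is phrased with the latter space and why the spectral consequences (analogues of parts (iii)--(v) of Theorem \ref{main}) go through via Proposition \ref{prop0} applied to $T$ on $\tilde{\mathcal H}_\alpha^{\text{loc}}$. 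I would close by remarking that once $\hL=\mL_{\phi+\psi}$ on $\tilde{\mathcal H}_\alpha^{\text{loc}}$ is established, the existence of a dominant simple eigenvalue $\hat\lambda$, positive eigenfunction $\hat\rho$, eigenmeasure $\hat\nu$, the conditionally stationary measure of Corollary \ref{cor1}, and the identification of the support all follow verbatim.
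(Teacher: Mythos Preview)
Your proof is correct and follows essentially the same route as the paper: write $\hL(f)=\sum_i p_i\,\mL_\phi(f\cdot\1_i)=\mL_\phi(f\cdot g)$ with $g=\sum_i p_i\1_i$, set $\psi=\log g$, and observe that $g$ is locally constant on the Markov partition (hence in $\tilde{\mathcal H}_\alpha^{\text{loc}}$) and strictly positive because $H_K=\varnothing$ forces $g\ge p_K>0$. Your additional remarks on the distinction between $\mathcal G$ and $\mathcal P$ and on the spectral consequences are accurate but go beyond what the paper records for this proposition.
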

\begin{proof}
Let $X_{\omega_i}=[0,1]\setminus H_{\omega_i}$. We have
 $$\hL (f)(x)=\sum_{i=1}^Kp_i\mL_{\phi}(1_{X_{\omega_i}}f)(x)=\mL_{\phi}(\sum_{i=1}^Kp_i1_{X_{\omega_i}}\cdot f)(x)=\mL_{\phi +\psi} (f),$$
 where $\psi(x)=\log\left(\sum_{i=1}^Kp_i1_{X_{\omega_i}}(x)\right)$. Note $\psi\in\tilde{\mathcal{H}}_{\alpha}^{\text{loc}}$ because $\sum_{i=1}^Kp_i1_{X_{\omega_i}}\in\tilde{\mathcal{H}}_{\alpha}^{\text{loc}}$ and $\sum_{i=1}^Kp_i1_{X_{\omega_i}}(x)>0$ since\footnote{In general one can replace the condition $H_K=\emptyset$ by asking $\sum_{i=1}^Kp_i1_{X_{\omega_i}}(x)>0$ and Proposition \ref{prop2} would still hold. We asked explicitly for $H_K=\emptyset$ to be one of the events because this is the most interesting case in our work.} $H_K=\emptyset$.
\end{proof}
\begin{example}\label{ex1}
We consider the following example: $T(x)=2x$ mod $1$. Let $ H=(0,1/2)$ be a {\emph hole}. We now consider {\emph {random holes}} in the following way: at each time $n$, $H$ is either `closed' with probability $p$, or $H$ is `open' with probability $1-p$ . When acting on piecewise constant functions with respect to the partition $[0,1/2),[1/2,1)$, the  transfer operator $\hL$, with $\phi(x)=-\ln|T'x|$, can be represented by the following matrix (acting by multiplication from the left):
\begin{equation}\label{matrix1}
\hat{\mathbb A}_{\phi}:=p\left[\begin{array}{cc}
                1/2 & 1/2 \\
                1/2 & 1/2
              \end{array}\right]+(1-p)\left[\begin{array}{cc}
                0 & 0 \\
                1/2 & 1/2
              \end{array}\right]=\left[\begin{array}{cc}
                p/2 & p/2 \\
                1/2 & 1/2
              \end{array}\right].
\end{equation}
The dominant eigenvalue of the above matrix\footnote{The dominant eigenvalue, and the corresponding eigenfunction, of $\hL$ when acting on H\"older functions, is the same as those of the above matrix reprsentation. The same is true for the dual operator $\hL^*$ using the above matrix with multiplication from the right.}  is given by
$$\hat\lambda=\frac{1+p}{2}.$$ The corresponding left and right eigenvectors are, respectively,
$$\hat\rho=[1\,\,\,1]\hskip 1cm \text{and}\hskip 1cm \hat\nu=[\frac{p}{p+1}\,\,\,\frac{1}{p+1}].$$
Consequently,  for any measurable set $A$, the absolutely continuous conditionally invariant measure $\hat{\alpha}$ is given by:
\begin{equation*}
\begin{split}
\hat{\alpha}(A)&=\frac{1}{\hat{\lambda}}\int_I\int_{\Omega}{\bf 1}_A{\bf 1}_{X_{\omega}}\hat\rho d\theta(\omega) dm\\
&=\frac{2p}{p+1}\left(m(A\cap[0,1/2])\right)+\frac{2}{p+1}\left(m(A\cap[1/2,1])\right).
\end{split}
\end{equation*}
To find the measure $\hat\mu_{\phi,\theta}$, we consider $\hat{ \mathbb A}^*_{\phi}$, the transpose of the matrix defined in \eqref{matrix1}. Let $\mathbb Q$ be the stochastic matrix defined by
$${\mathbb Q}_{ij}=(\hat{\mathbb A}^*_{\phi})_{ij}\frac{\hat\rho_j}{\hat\lambda\rho_i}$$
whose stationary probability vector $q$ is defined by
$$q_{i}=\hat\rho_i\cdot\hat\nu_i;$$
i.e.,
\begin{equation}
\mathbb Q:=\left[\begin{array}{cc}
                p/(p+1) &1/( p+1) \\
                p/(p+1)& 1/(p+1)
              \end{array}\right],
\hskip 1cm q=[p/(p+1)\,\ 1/(p+1)].
\end{equation}
Then $\hat\mu_{\phi,\theta}$, defined on cylinder sets, is given by
\begin{equation*}
\hat\mu_{\phi,\theta}\left(Z(j,a_0,a_1,\dots,a_{n-1}) \right)=q_{a_0}\cdot\mathbb Q_{a_0a_1}\cdots\mathbb Q_{a_{n-2}a_{n-1}}
\end{equation*}
with
$$\hat\mu_{\phi,\theta}\left(Z(j,a_0)\right)=q_{a_0},$$
and each $a_i\in\{1,2\}\equiv\{[0,1/2),[1/2,1)\}$. Notice that
$$(\frac{p}{1+p})^n\le\hat\mu_{\phi,\theta}\left(Z(j,a_0,a_1,\dots,a_{n-1} )\right)\le(\frac{1}{1+p})^n.$$
Thus, we conclude that the support of $\hat\mu_{\phi,\theta}$ is a fat set.
\end{example}
\begin{example}\label{ex2}
We now consider another example to get a sense on the difference between the case `$\theta(\{0\})>0$' (as in Example \ref{ex1}) and the case when `$\theta(\{0\})=0$': $T(x)=3x$ mod $1$. Let $ H_1=(0,1/3)$ and $H_2=(0,2/3)$ be two holes. We now consider random holes in the following way: at each time $n$, we either have a `small' hole $H_1$ with probability $p$, or a bigger hole $H_2$ with probability $1-p$. Again the transfer operator $\hat{\mathcal{L}}_{\phi}$, with $\phi(x)=-\ln|T'x|$, can be represented by the following matrix (acting by multiplication from the left):
\begin{equation}\label{matrix2}
\hat{\mathbb A}_{\phi}:=p\left[\begin{array}{ccc}
                0 & 0 & 0\\
                1/3 & 1/3 & 1/3 \\
                1/3 & 1/3 & 1/3
              \end{array}\right]+(1-p)\left[\begin{array}{ccc}
                0 & 0 &0\\
                0  &0 &0 \\
                1/3&1/3&1/3
              \end{array}\right]=\left[\begin{array}{ccc}
               0 & 0 & 0\\
                p/3 & p/3 & p/3 \\
                1/3 & 1/3 & 1/3
              \end{array}\right].
\end{equation}
The dominant eigenvalue of the above matrix is given by $$\hat\lambda=\frac{1+p}{3}.$$ The corresponding left and right eigenvectors are, respectively,
$$\hat\rho=[1\,\,\,1\,\,\,1]\hskip 1cm \text{and}\hskip 1cm \hat\nu=[0\,\,\,\frac{p}{p+1}\,\,\,\frac{1}{p+1}].$$
Note that for any measurable set $A$, the absolutely continuous conditionally invariant measure $\hat\alpha$ is given by
\begin{equation*}
\begin{split}
\hat\alpha(A)&=\frac{1}{\hat{\lambda}}\int_I\int_{\Omega}{\bf 1}_A{\bf 1}_{X_{\omega}}\hat\rho d\theta(\omega) dm\\
&=\hat{\mu}_{\phi,\theta}(A)=\frac{3p}{p+1}\left(m(A\cap[1/3,2/3])\right)+\frac{3}{p+1}\left(m(A\cap[2/3,1])\right).
\end{split}
\end{equation*}
To find the measure $\hat\mu_{\phi,\theta}$, we consider $\hat{ \mathbb A}^*_{\phi}$, the transpose of the matrix defined in \eqref{matrix2}. Let $\mathbb Q$ be the stochastic matrix defined by
$${\mathbb Q}_{ij}=(\hat{\mathbb A}^*_{\phi})_{ij}\frac{\hat\rho_j}{\hat\lambda\rho_i}$$
whose stationary probability vector $q$ is defined by
$$q_{i}=\hat\rho_i\cdot\hat\nu_i;$$
i.e.,
\begin{equation}
\mathbb Q:=\left[\begin{array}{ccc}
              0&  p/(p+1) &1/( p+1) \\
              0&  p/(p+1) &1/( p+1) \\
               0& p/(p+1)& 1/(p+1)
              \end{array}\right],
\hskip 1cm q=[0\,\ p/(p+1)\,\ 1/(p+1)].
\end{equation}
Then $\hat\mu_{\phi,\theta}$, defined on cylinder sets, is given by
\begin{equation*}
\hat\mu_{\phi,\theta}\left(Z(j,a_0,a_1,\dots,a_{n-1}) \right)=q_{a_0}\cdot\mathbb Q_{a_0a_1}\cdots\mathbb Q_{a_{n-2}a_{n-1}}
\end{equation*}
with
$$\hat\mu_{\phi,\theta}\left(Z(j,a_0)\right)=q_{a_0},$$
and each $a_i\in\{1,2,3\}\equiv\{[0,1/3),[1/3,2/3),[2/3,1)\}$. Notice that
$$0\le\hat\mu_{\phi,\theta}\left(Z(j,a_0,a_1,\dots,a_{n-1}) \right)\le(\frac{1}{1+p})^n.$$
In particular, for any $Z(j,a_0,a_1,\dots,a_{n-1})$ such that $a_i=1$ for some $j\le i\le j+n-1$ we have $\hat\mu_{\phi,\theta}\left(Z(j,a_0,a_1,\dots,a_{n-1})\right)=0$. Thus, we conclude that the support of $\hat\mu_{\phi,\theta}$ is a thin set.
\end{example}
\begin{remark}
The above examples suggest the following interesting problem: In Example \ref{ex1}, where $\theta(\{0\})>0$ the measure $ \hat{\mu}_{\phi,\theta}$ is supported on a fat subset of the interval $[0,1]$. In Example \ref{ex2}, where $\theta(\{0\})=0$, the measure is only supported on a thin subset of $[1/3,1]$. It would be interesting to work out analogous theorems to those in the previous sections in the case when $\theta(0)=0$. Based on the above examples, we believe that for the \emph{same} map $T$, one may get a `fat' survival set when $\theta(\{0\})>0$; however, when $\theta(\{0\})=0$ the survival set would be `thinner'.
\end{remark}

{\bf Acknowledgment.} We would like to thank anonymous referees for their suggestions which improved the presentation of the paper.

\end{document}